\definecolor{ForestGreen}{rgb}{0.1,0.6,0.05}
\definecolor{EgyptBlue}{rgb}{0.063,0.1,0.6}
\newtheorem{theorem}{Theorem}
\newtheorem{proposition}[theorem]{Proposition}
\newtheorem{lemma}[theorem]{Lemma}
\newtheorem{corollary}[theorem]{Corollary}
\theoremstyle{definition}
\newtheorem{remark}[theorem]{Remark}
\let\OLDthebibliography\thebibliography
\renewcommand\thebibliography[1]{
	\OLDthebibliography{#1}
	\setlength{\parskip}{1pt}
	\setlength{\itemsep}{1pt plus 0.3ex}
}
\numberwithin{equation}{section}
\numberwithin{theorem}{section}
\numberwithin{equation}{section}
\numberwithin{theorem}{section}
\title[On asymptotic behaviour of Dirichlet inverse]{On asymptotic behaviour of Dirichlet inverse}
\author[F.~Baustian]{Falko Baustian}
\author[V.~Bobkov]{Vladimir Bobkov}
\address[F.~Baustian]{\newline\indent
	Department of Mathematics, University of Rostock,
	\newline\indent 
	Ulmenstra{\ss}e 69, 18057 Rostock, Germany
}
\email{falko.baustian@uni-rostock.de}
\address[V.~Bobkov]{\newline\indent
	Department of Mathematics and NTIS, Faculty of Applied Sciences,
	\newline\indent 
	University of West Bohemia, Univerzitn\'i 8, 301 00 Plze\v{n}, Czech Republic\\
	\newline\indent 
	Institute of Mathematics, Ufa Federal Research Centre, RAS,
	\newline\indent 
	Chernyshevsky str. 112, 450008 Ufa, Russia
}
\email{bobkov@kma.zcu.cz}
\date{}
\subjclass[2010]{
	11A25,	
	11N37,	
	11N56, 	
	05A16,  
	05A17.	
}
\keywords{Dirichlet inverse, Dirichlet convolution, asymptotics, ordered factorizations}
\thanks{
	V.~Bobkov was supported by the grant 18-03253S of the Grant Agency of the Czech Republic and by the project LO1506 of the Czech Ministry of Education, Youth and Sports.
}
\begin{document}
\begin{abstract} 
	Let $f(n)$ be an arithmetic function with $f(1)\neq0$ and let $f^{-1}(n)$ be its reciprocal with respect to the Dirichlet convolution.
	We study the asymptotic behaviour of $|f^{-1}(n)|$ with regard to the asymptotic behaviour of $|f(n)|$ assuming that the latter one grows or decays with at most polynomial or exponential speed. 
	As a by-product, we obtain simple but constructive upper bounds for the number of ordered factorizations of $n$ into $k$ factors. 
\end{abstract} 
\maketitle 

\section{Introduction}\label{sec:intro}

Let $f: \mathbb{N} \mapsto \mathbb{R}$ be an arithmetic function. 
The set of those $f(n)$ with $f(1) \neq 0$ endowed with the Dirichlet convolution defined as
$$
(f \ast g)(n) = \sum_{d|n} f\left(\frac{n}{d}\right) g(d), \quad n \in \mathbb{N},
$$
forms an abelian group. The identity element $\varepsilon(n)$ is given by $\varepsilon(1)=1$, $\varepsilon(n)=0$ for all $n \geq 2$, and we denote by $f^{-1}(n)$ the corresponding inverse of $f(n)$, i.e.,
\begin{equation}\label{eq:ffe}
(f \ast f^{-1})(n) = (f^{-1} \ast f)(n) = \varepsilon(n), \quad n \in \mathbb{N}.
\end{equation}
We call $f^{-1}(n)$ \textit{the Dirichlet inverse} of $f(n)$ and note that $f^{-1}(n)$ can be determined recursively via \eqref{eq:ffe} as
\begin{equation}\label{eq:DI}
f^{-1}(1) = \frac{1}{f(1)}
\quad\text{and}\quad
f^{-1}(n) = - \frac{1}{f(1)}\sum_{\substack{d|n \\ d<n}} f\left(\frac{n}{d}\right) f^{-1}(d), \quad n \geq 2.
\end{equation}
Alternatively, $f^{-1}(n)$ can be found in the following nonrecurrent way:
\begin{equation}\label{eq:Dirinv1}
f^{-1}(n) = \sum_{k=1}^{\Omega(n)} \frac{(-1)^k}{f(1)^{k+1}} \sum_{\substack{d_1\cdots d_k=n \\ d_1,\cdots, d_k \geq 2}} f(d_1) \cdots f(d_k),
\quad n \geq 2,
\end{equation}
where $\Omega(n)$ is the number of prime factors of $n$ counted with multiplicities. The formula \eqref{eq:Dirinv1} can be obtained from \cite[Theorem 2.2]{hauk} using the evident identity
\begin{equation}\label{eq:rescaling}
(a f)^{-1}(n) = \frac{1}{a} f^{-1}(n), \quad n \geq 1, \quad a \in \mathbb{R} \setminus \{0\}.
\end{equation}
As a consequence of \eqref{eq:rescaling}, \textit{we will always assume that} $f(1)=1$, unless otherwise stated. 
In particular, as was noticed by Hille \cite{hille}, taking $f(n)=-1$ for all $n \geq 2$, one gets $f^{-1}(n)=H(n)$, where 
\begin{equation}\label{eq:HnHk}
H(n) 
=
\sum_{k=1}^{\Omega(n)} H_k(n)
= 
\sum_{k=1}^{\Omega(n)} \sum_{\substack{d_1\cdots d_k=n \\ d_1,\cdots, d_k \geq 2}} 1, \quad n \geq 2,
\end{equation}
is \textit{the number of ordered factorizations of $n$} and $H_k(n)$ is \textit{the number of ordered factorizations of $n$ into $k$ factors} where each factor is greater than or equal to $2$.

\medskip
In the analysis of various problems there appears a necessity to control the growth or decay rate of both $f(n)$ and $f^{-1}(n)$, simultaneously. For instance, Segal showed in \cite{segal1} that if $f(n) = O(1)$ and $f^{-1}(n) = O(1)$ as $n \to \infty$, along with other assumptions, then $\sum_{n \leq x} l(n) \sim x$, where $l(n)$ represents the coefficients of the Dirichlet series
$$
-\frac{\mathcal{D}'(s)}{\mathcal{D}(s)} = \sum_{n=1}^{\infty} \frac{l(n)}{n^s}
\quad \text{with} \quad 
\mathcal{D}(s) = \sum_{n=1}^{\infty} \frac{f(n)}{n^s}.
$$
This result can be seen as an analogue of the prime number theorem. 
On the other hand, Segal proposed in \cite{segal2} the following generalization of Ingham's summation method \cite{ingham,wintner}: a series $\sum_{n=1}^{\infty}a_n$ is said to be $(D, f(n))$-summable to $A \in \mathbb{R}$ whenever
$$
\lim\limits_{x \to \infty} D(x) = A,
\quad \text{where}~ 
D(x) = \frac{1}{x} \sum_{n \leq x} n \sum_{d | n} a_d f\left(\frac{n}{d}\right).
$$
Properties of the $(D, f(n))$-summation method crucially depend on the summability of $f(n)$ and $f^{-1}(n)$, and, as a consequence, on their asymptotic behaviour, see, e.g., \cite{jukes,segal2}. 

Assume now that $f(n)$ is given by the Fourier coefficients of a function $F \in L^2(0,1)$ which is extended to the whole $\mathbb{R}$ antiperiodically with period $1$. 
In analogy with properties of the standard trigonometric system $\{\sin(n \pi x)\}$ it is natural to ask which assumptions one should impose on $F$ in order to guarantee that the system
\begin{equation}\label{eq:F}
F(x),~ F(2x),~ F(3x),~ \dots,
\end{equation} 
forms a basis in $L^2(0,1)$ or at least is complete in the same space.
Here, by completeness of \eqref{eq:F} we mean that any function from $L^2(0,1)$ can be approximated in the $L^2$-norm with an arbitrary precision by finite linear combinations of functions \eqref{eq:F}.
This problem has been intensively studied, see, e.g., historical remarks in \cite{lind2,wintner2}. 
In particular, the following result was obtained by Hedenmalm et al.\ in \cite{lind}.
\begin{theorem}[\protect{\cite[Theorem 5.7 (a)]{lind}}]\label{thm:lind}
	Let $F \in L^2(0,1)$ be such that 
	\begin{equation}\label{eq:fn-sum}
	\sum_{n=1}^{\infty} \left|f(n)\right|^2 \tau(n) < \infty 
	\quad
	\text{and}
	\quad
	\sum_{n=1}^{\infty} \left|f^{-1}(n)\right|^2 \tau(n) < \infty,
	\end{equation}
	where $\tau(n)$ is the number of divisors of $n$.
	Then the system $\{F(nx)\}$ is complete in $L^2(0,1)$.
\end{theorem}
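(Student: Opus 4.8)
The plan is to translate the completeness question into a density statement for Dirichlet convolutions in $\ell^2$, and then to exploit the convolution identity $f \ast f^{-1} = \varepsilon$. Writing $F$ in its sine series $F(t) = \sum_{n\geq 1} f(n)\sqrt{2}\sin(n\pi t)$, the dilated function $F(mt)$ has sine-Fourier coefficients obtained by stretching the sequence $f$ by the factor $m$, so that a finite linear combination $\sum_m c_m F(mt)$ corresponds precisely to the coefficient sequence $c \ast f$. Hence the closed linear span of $\{F(mt)\}$ in $L^2(0,1)$ is isometric to the $\ell^2$-closure of $\{c \ast f : c \text{ finitely supported}\}$, and proving completeness amounts to showing that this closure is all of $\ell^2(\mathbb{N})$. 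Since $\{\sqrt{2}\sin(m\pi t)\}_{m\geq1}$ is an orthonormal basis, it suffices to reach each basis vector, and it is natural to reach the first one, $\varepsilon = (1,0,0,\dots)$, first.

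The technical heart is the following $\tau$-weighted bound on the $\ell^2$-norm of a Dirichlet convolution, which is exactly where the divisor weight in \eqref{eq:fn-sum} enters. For arithmetic functions $g,h$ I would prove
\begin{equation*}
\sum_{k=1}^\infty |(g\ast h)(k)|^2 \leq \Big(\sum_{a=1}^\infty |g(a)|^2\tau(a)\Big)\Big(\sum_{b=1}^\infty |h(b)|^2\tau(b)\Big).
\end{equation*}
This follows by applying the Cauchy--Schwarz inequality to $(g\ast h)(k) = \sum_{ab=k} g(a)h(b)$, whose $\tau(k)$ terms give $|(g\ast h)(k)|^2 \leq \tau(k)\sum_{ab=k}|g(a)|^2|h(b)|^2$; summing over $k$, reindexing by the pair $(a,b)$, and using the submultiplicativity $\tau(ab)\leq \tau(a)\tau(b)$ decouples the two factors.

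With this inequality in hand I would truncate the Dirichlet inverse: set $c^{(N)}(n) = f^{-1}(n)$ for $n\leq N$ and $c^{(N)}(n)=0$ otherwise. Each $c^{(N)}$ is finitely supported, so $f\ast c^{(N)}$ lies in the span, and since $f\ast f^{-1} = \varepsilon$, the estimate gives
\begin{equation*}
\|f\ast c^{(N)} - \varepsilon\|_{\ell^2}^2 = \|f\ast (f^{-1}-c^{(N)})\|_{\ell^2}^2 \leq \Big(\sum_{a}|f(a)|^2\tau(a)\Big)\Big(\sum_{b>N}|f^{-1}(b)|^2\tau(b)\Big),
\end{equation*}
whose right-hand side tends to $0$ as $N\to\infty$ because both series in \eqref{eq:fn-sum} converge (the first factor is bounded, the second is a convergent tail). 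Thus $\varepsilon$, i.e.\ the function $\sqrt{2}\sin(\pi t)$, lies in the closed span. Finally, applying the dilation $t\mapsto mt$, which is an isometry of $L^2(0,1)$ and maps the span into itself, to the approximants $\sum_{n\leq N} f^{-1}(n)F(nt)$ shows that each $\sqrt{2}\sin(m\pi t)$ also lies in the closed span; since these form a complete orthonormal system, completeness of $\{F(mt)\}$ follows. The main obstacle I anticipate is not any single step but getting the coefficient/dilation dictionary and the weighted inequality exactly right, since everything hinges on the divisor weight $\tau$ being precisely what controls $\|f\ast g\|_{\ell^2}$; once that is set up, the role of $f^{-1}$ is simply to invert $f$ against the convolution identity $\varepsilon$.
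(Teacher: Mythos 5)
The paper does not prove this theorem at all: it is imported verbatim from Hedenmalm--Lindqvist--Seip \cite[Theorem 5.7 (a)]{lind}, so there is no internal proof to compare against. Your argument is correct and is essentially the argument of that source: the coefficient dictionary identifying finite combinations $\sum_m c_m F(mx)$ with the convolutions $c \ast f$, the weighted Cauchy--Schwarz bound $\sum_k |(g\ast h)(k)|^2 \leq \bigl(\sum_a |g(a)|^2\tau(a)\bigr)\bigl(\sum_b |h(b)|^2\tau(b)\bigr)$ (valid via the $\tau(k)$-term Cauchy--Schwarz estimate and the submultiplicativity $\tau(ab)\leq\tau(a)\tau(b)$), the truncation $c^{(N)}$ of $f^{-1}$ to approximate $\varepsilon$ in $\ell^2$, and finally dilation to reach every basis vector $\sqrt{2}\sin(m\pi x)$. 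The one point worth spelling out explicitly is that the dilation $g(x)\mapsto g(mx)$ is an isometry of the relevant subspace of $L^2(0,1)$ precisely because of the antiperiodic extension assumed in the paper's setup: $g(x+1)=-g(x)$ makes $|g|^2$ $1$-periodic, so $\int_0^1 |g(mx)|^2\,dx = \int_0^1 |g|^2\,dx$, which is what lets you push the approximants $\sum_{n\leq N} f^{-1}(n)F(nx)$ through the dilation and conclude completeness.
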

It is well-known that $\tau(n) = o(n^\delta)$ for any $\delta>0$, see, e.g., \cite[p.\ 296]{apostol}. Thus, the assumptions \eqref{eq:fn-sum} can be easily verified provided
\begin{equation*}\label{eq:fn<infty}
\left|f(n)\right| \leq C_1 n^{-\frac{1}{2}-\varepsilon}
\quad
\text{and}
\quad
\left|f^{-1}(n)\right| \leq C_2 n^{-\frac{1}{2}-\eta},
\quad n \geq 1,
\end{equation*}
for some $C_1, C_2,\varepsilon,\eta>0$.
See also \cite[pp.\ 764-765]{wintner2} and \cite{sowa} for similar assumptions guaranteeing that the system \eqref{eq:F} forms a basis in $L^2(0,1)$. We recognize that the asymptotics of $f(n)$ and $f^{-1}(n)$ can be used in the study of such kind of problems, as well.

\medskip
Although the asymptotic behaviour of $f(n)$ can be considered as given or relatively easy to obtain, the asymptotic behaviour of $f^{-1}(n)$ is, in general, a hard issue and it can be drastically different from those of $f(n)$. 
As an example, assume that $f(2)=-1$ and $f(n)=0$ for all $n \geq 3$. 
Then we easily see from \eqref{eq:Dirinv1} that $f^{-1}(2^k)=1$ for all $k \geq 1$, and $f^{-1}(n)=0$ for any $n$ with a prime factor different from $2$. That is, $|f^{-1}(n)|$ does not have to converge to $0$ as $n \to \infty$ even if $|f(n)|$ decays arbitrarily fast. Clearly, this is due to the definition of $f^{-1}(n)$ from which we see that the asymptotic behaviour of $f^{-1}(n)$ depends on values of $f(n)$ for all $n$ rather than only for sufficiently large $n$.   
 
Nevertheless, under additional requirements, the asymptotic behaviour of $f^{-1}(n)$ can be explicitly controlled by or compared with those of $f(n)$. 
Perhaps, the simplest case of this type occurs if $f(n)$ is assumed to be \textit{totally (completely) multiplicative}, i.e.,
\begin{equation}\label{eq:total}
f(m) f(n) = f(mn)
\quad \text{for all }
m,n \in \mathbb{N}.
\end{equation}
Then the Dirichlet inverse $f^{-1}(n)$ has the following explicit form:
$$
f^{-1}(n) = \mu(n) f(n), \quad n \geq 1,
$$
where $\mu(n)$ is the M\"obius function defined by $\mu(1)=1$ and
$$
\mu(n) =
\left\{
\begin{aligned}
&0 &&\text{if $n$ has a squared prime factor},\\
&(-1)^r &&\text{if $n$ is the product of $r$ distinct primes},
\end{aligned}
\right.
\qquad
n \geq 2,
$$
see, e.g., \cite[Theorem 2.17]{apostol}.
Therefore, $|\mu(n)| \leq 1$ and hence 
$$
|f^{-1}(n)| \leq |f(n)|
\quad 
\text{for all}~ n \geq 1.
$$ 
In other words, $|f^{-1}(n)|$ cannot grow faster or decay slower than $|f(n)|$. 

\medskip
However, in general position, $f(n)$ is not totally multiplicative, and to the best of our knowledge there are not many results connecting the asymptotic behaviour of $f^{-1}(n)$ with those of $f(n)$ without assuming \eqref{eq:total}, see, e.g., \cite{hauk2,kacper,sowa} for some particular classes of $f(n)$. 
The aim of the present article is thus to investigate the assumptions on $f(n)$ under which the explicit control of the behaviour of $f^{-1}(n)$ is possible. We will concentrate on the cases where $f(n)$ has at most polynomial or exponential speed as $n \to \infty$, that is, 
$$
\text{either}
\quad  
|f(n)| \leq C n^\gamma 
\quad
\text{or}
\quad
|f(n)| \leq A c^n, 
\quad n \geq 2,
$$
for some $C>0$, $\gamma \in \mathbb{R}$, and $A, c>0$.

\medskip
This article is organized as follows.
In Section \ref{sec:Hkn}, we obtain some estimates on $H(n)$, $H_k(n)$, and their generalizations which will be used in the sequel but also have an independent interest. 
In Section \ref{sec:decay}, we present and prove our main results concerning the asymptotic behaviour of $f^{-1}(n)$. 
We consider several weakenings of the total multiplicativity assumption \eqref{eq:total} in Section \ref{sec:subsupermult}, the general case is studied in Section \ref{sec:general}, and we conclude the article with some miscellaneous cases in Section \ref{sec:misc}.

\section{Number of ordered factorizations}\label{sec:Hkn}
In this section, we give some upper bounds on the functions $H(n)$ and $H_k(n)$ defined by \eqref{eq:HnHk} and on their generalizations. 
Let $\mathcal{P}$ be a subset of $\mathbb{N}_2 = \{2,3,\dots\} \subset \mathbb{N}$. 
Denote by $H(n,\mathcal{P})$ the number of ordered factorizations of $n$ where each factor belongs to $\mathcal{P}$, and by $H_k(n,\mathcal{P})$ the corresponding number of ordered factorizations of $n$ into $k$ factors, that is,
\begin{equation*}\label{eq:hnp}
H(n,\mathcal{P}) 
=
\sum_{k=1}^{\Omega(n)} H_k(n,\mathcal{P})
= 
\sum_{k=1}^{\Omega(n)} \sum_{\substack{d_1\cdots d_k=n \\ d_1,\cdots, d_k \in \mathcal{P}}} 1, 
\quad n\geq 2.
\end{equation*}
In particular, if $\mathcal{P} = \mathbb{N}_2$, then $H(n,\mathcal{P}) = H(n)$ and $H_k(n,\mathcal{P}) = H_k(n)$. 
The functions $H(n,\mathcal{P})$ and $H_k(n,\mathcal{P})$ have been intensively studied starting from the work of Kalm\'ar \cite{kalmar}, see, e.g., \cite{chor,cop,hille,hwangjan} and overviews \cite{knopf,spri}.

We start with several standard observations.
Notice that 
\begin{equation}\label{eq:Hkrec1}
H_1(n, \mathcal{P}) 
=
\left\{ 
\begin{aligned}
&1 &\text{if}~ n \in \mathcal{P},\\
&0 &\text{if}~ n \not\in \mathcal{P}.
\end{aligned}
\right.
\end{equation}
Let us denote by $\zeta_\mathcal{P}(s)$ the Dirichlet series associated with $H_1(n, \mathcal{P})$, that is, 
$$
\zeta_\mathcal{P}(s) = \sum_{m=1}^{\infty} \frac{H_1(m, \mathcal{P})}{m^s} = \sum_{m \in \mathcal{P}} \frac{1}{m^s}.
$$
In particular, if $\mathcal{P} = \mathbb{N}_2$, then $\zeta_\mathcal{P}(s) = \zeta(s)-1$, where $\zeta(s)$ is the Riemann zeta function. 

Let $\sigma_\mathcal{P}$ be the abscissa of convergence of $\zeta_\mathcal{P}(s)$. 
If $\mathcal{P}$ has a finite cardinality, then $\sigma_\mathcal{P} = -\infty$, while if $\mathcal{P}$ is infinite, then $\sigma_\mathcal{P} \in [0,1]$. 
For our further purposes, we will be interested only in real $s \geq 0$.
Clearly, $\zeta_\mathcal{P}(s)$ decreases, $\zeta_\mathcal{P}(s) \to 0$ as $s \to \infty$, and there exists $s_0 \geq \max\{\sigma_\mathcal{P},0\}$ such that $\zeta_\mathcal{P}(s_0) \geq 1$.
Hereinafter, we will denote by $\rho(\mathcal{P})$ the unique real root of $\zeta_\mathcal{P}(s) = 1$, $s  \geq s_0$. 

The functions $H(n,\mathcal{P})$ and $H_k(n,\mathcal{P})$ can be determined recursively by
\begin{equation}\label{eq:Hrec}
H(1,\mathcal{P}) = 1 
\quad \text{and} \quad
H(n,\mathcal{P}) = \sum_{\substack{d|n\\ d \in \mathcal{P}}} H\left(\frac{n}{d},\mathcal{P}\right),
\quad n \geq 2,
\end{equation}
and
\begin{equation}\label{eq:Hkrec}
H_k(n,\mathcal{P}) = \sum_{\substack{d|n\\ d \in \mathcal{P}}} H_{k-1}\left(\frac{n}{d},\mathcal{P}\right),
\quad k \geq 2,
\end{equation}
where $H_1(n, \mathcal{P})$ is given by \eqref{eq:Hkrec1}.
We see that 
$$
H_k(n, \mathcal{P}) = H_{k-1}(n, \mathcal{P}) \ast H_1(n, \mathcal{P}) = H_{k-2}(n, \mathcal{P}) \ast H_1(n, \mathcal{P}) \ast H_1(n, \mathcal{P}) = \dots
$$
Thus, considering the Dirichlet series associated with $H_k(n, \mathcal{P})$, we obtain 
\begin{equation}\label{eq:DirichletH0}
\sum_{m=1}^{\infty} \frac{H_k(m,\mathcal{P})}{m^s} = \zeta_\mathcal{P}(s)^k, \quad k \geq 1.
\end{equation}

\begin{lemma}\label{lem:upperH0}
	Let $n \geq 1$. Then 
	\begin{equation*}\label{eq:upperH00}
	H(n, \mathcal{P}) \leq n^{\rho(\mathcal{P})}.
	\end{equation*}
\end{lemma}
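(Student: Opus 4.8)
The plan is to argue by strong induction on $n$, using the recursion \eqref{eq:Hrec} together with the defining property $\zeta_\mathcal{P}(\rho(\mathcal{P})) = 1$ of the exponent. Throughout write $\rho = \rho(\mathcal{P})$ for brevity, and recall that by construction $\rho \geq \max\{\sigma_\mathcal{P}, 0\}$, so that the series $\zeta_\mathcal{P}(s) = \sum_{m \in \mathcal{P}} m^{-s}$ converges at $s = \rho$ with value exactly $1$.

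The base case $n = 1$ is immediate, since $H(1,\mathcal{P}) = 1 = 1^\rho$. For the inductive step I would fix $n \geq 2$ and assume $H(m,\mathcal{P}) \leq m^\rho$ for all $1 \leq m < n$. The key structural observation is that every divisor $d$ of $n$ with $d \in \mathcal{P}$ satisfies $d \geq 2$, because $\mathcal{P} \subseteq \mathbb{N}_2$; hence $n/d < n$ and the induction hypothesis genuinely applies to each quotient $n/d$ appearing in \eqref{eq:Hrec}. Substituting the hypothesis into \eqref{eq:Hrec} then gives
$$
H(n,\mathcal{P}) = \sum_{\substack{d|n\\ d \in \mathcal{P}}} H\!\left(\frac{n}{d},\mathcal{P}\right) \leq \sum_{\substack{d|n\\ d \in \mathcal{P}}} \left(\frac{n}{d}\right)^{\rho} = n^{\rho} \sum_{\substack{d|n\\ d \in \mathcal{P}}} \frac{1}{d^{\rho}}.
$$

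It remains to show that the final divisor sum is at most $1$. Here I would use that the set of divisors of $n$ lying in $\mathcal{P}$ is a subset of $\mathcal{P}$ and that all summands $d^{-\rho}$ are positive, so the partial sum is majorized by the full series, which is normalized to $1$ precisely by the choice of $\rho$:
$$
\sum_{\substack{d|n\\ d \in \mathcal{P}}} \frac{1}{d^{\rho}} \leq \sum_{m \in \mathcal{P}} \frac{1}{m^{\rho}} = \zeta_\mathcal{P}(\rho) = 1.
$$
Combining the two displays yields $H(n,\mathcal{P}) \leq n^\rho$, completing the induction. I do not expect any real obstacle: the argument is short once one commits to strong induction and recognizes that $\rho(\mathcal{P})$ is engineered exactly so that the relevant divisor sum collapses to at most the normalizing value $1$. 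The only details worth flagging are the convergence of $\zeta_\mathcal{P}$ at $s=\rho$, which legitimizes the majorization by the full series, and the inclusion $d \in \mathcal{P} \Rightarrow d \geq 2$, which ensures the induction is carried out on strictly smaller arguments.
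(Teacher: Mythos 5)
Your proposal is correct and follows essentially the same route as the paper's proof: strong induction on $n$ via the recursion \eqref{eq:Hrec}, with the divisor sum majorized by the full series $\zeta_\mathcal{P}(\rho(\mathcal{P}))=1$. Your explicit remarks on the convergence of $\zeta_\mathcal{P}$ at $s=\rho(\mathcal{P})$ and on $d\in\mathcal{P}\Rightarrow d\geq 2$ (so the induction applies to $n/d<n$) are details the paper leaves implicit, but they do not change the argument.
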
	
\begin{proof}
	We will argue in much the same way as in \cite[Section 2]{cop} and prove the result by induction. 
	The base of induction is trivial. 
	Take some $n \geq 2$ and suppose that 
	$$
	H(m, \mathcal{P}) \leq m^{\rho(\mathcal{P})}
	\quad \text{for all}~ m < n.
	$$
	Let us show that the inequality remains valid for $m=n$.
	Using \eqref{eq:Hrec}, we deduce that 
	\begin{align*}
	H(n, \mathcal{P}) 
	= 
	\sum_{\substack{d|n\\ d \in \mathcal{P}}} H\left(\frac{n}{d},\mathcal{P}\right)
	\leq
	\sum_{\substack{d|n\\ d \in \mathcal{P}}} \left(\frac{n}{d}\right)^{\rho(\mathcal{P})} \leq 
	n^{\rho(\mathcal{P})} \sum_{d \in \mathcal{P}} \frac{1}{d^{\rho(\mathcal{P})}}
	=
	n^{\rho(\mathcal{P})} \zeta_\mathcal{P}(\rho(\mathcal{P})) 
	= 
	n^{\rho(\mathcal{P})},
	\end{align*}
	which completes the proof.
\end{proof}

Now we obtain an upper bound for $H_k(n, \mathcal{P})$.
\begin{lemma}\label{lem:upperH}
	Let $s>\sigma_\mathcal{P}$, $n \geq 2$, and $k \geq 1$. Then
	\begin{equation}\label{eq:upperH0}
	H_k(n, \mathcal{P}) \leq \frac{\zeta_\mathcal{P}(s)^{k-1} n^s}{\varrho^s},
	\end{equation}
	where $\varrho$ is the minimal element of $\mathcal{P}$. 
\end{lemma}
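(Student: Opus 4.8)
The plan is to establish \eqref{eq:upperH0} by induction on the number of factors $k$, with $s$ held fixed and the estimate asserted for all $n \geq 1$ rather than only $n \geq 2$. Including $n=1$, for which $H_k(1,\mathcal{P})=0$ whenever $k \geq 1$, costs nothing and is in fact needed, since the recursion \eqref{eq:Hkrec} feeds the value at $n/d=1$ back into the inductive step. This mirrors the inductive argument behind Lemma \ref{lem:upperH0}, except that the induction now runs over $k$; the recursion \eqref{eq:Hkrec} together with the identity $\sum_{d \in \mathcal{P}} d^{-s} = \zeta_\mathcal{P}(s)$ is what will contribute exactly one power of $\zeta_\mathcal{P}(s)$ at each stage.

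For the base case $k=1$ I would invoke the explicit form \eqref{eq:Hkrec1}: if $n \in \mathcal{P}$ then $n \geq \varrho$ by minimality of $\varrho$, so the right-hand side of \eqref{eq:upperH0} equals $(n/\varrho)^s \geq 1 = H_1(n,\mathcal{P})$; and if $n \notin \mathcal{P}$ then $H_1(n,\mathcal{P})=0$ and nothing is left to check. For the inductive step, assuming the claim for $k-1$ and all $n$, I would apply \eqref{eq:Hkrec}, insert the induction hypothesis for each $H_{k-1}(n/d,\mathcal{P})$, factor the common $n^s/\varrho^s$ out of the sum, and bound the truncated divisor sum by the full series:
\begin{align*}
H_k(n,\mathcal{P})
= \sum_{\substack{d \mid n \\ d \in \mathcal{P}}} H_{k-1}\!\left(\tfrac{n}{d},\mathcal{P}\right)
\leq \frac{\zeta_\mathcal{P}(s)^{k-2} n^s}{\varrho^s} \sum_{\substack{d \mid n \\ d \in \mathcal{P}}} \frac{1}{d^s}
\leq \frac{\zeta_\mathcal{P}(s)^{k-2} n^s}{\varrho^s}\, \zeta_\mathcal{P}(s)
= \frac{\zeta_\mathcal{P}(s)^{k-1} n^s}{\varrho^s}.
\end{align*}
The hypothesis $s > \sigma_\mathcal{P}$ guarantees that $\zeta_\mathcal{P}(s)$ converges, so every quantity above is finite and the manipulations are legitimate.

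The step requiring the most care is the \emph{choice} of inductive statement itself. The naive route — extracting a single term from the Dirichlet series \eqref{eq:DirichletH0}, giving $H_k(n,\mathcal{P}) \leq \zeta_\mathcal{P}(s)^{k} n^s$ — produces the exponent $k$ rather than the sharper $k-1$, and carries no $\varrho^{-s}$ improvement. The factor $\varrho^{-s}$ must therefore be present in the induction hypothesis from the outset: it is precisely this factor, transported unchanged through each step while the divisor sum yields one clean power of $\zeta_\mathcal{P}(s)$, that lets us convert a full power of $\zeta_\mathcal{P}(s)$ into the $\varrho^{-s}$ already available at $k=1$. Checking that the base case genuinely furnishes this $\varrho^{-s}$ through the minimality of $\varrho$, and confirming that the degenerate argument $n=1$ does not disturb the recursion, are the only remaining bookkeeping points.
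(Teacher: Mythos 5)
Your proof is correct, but it follows a genuinely different route from the paper's. The paper does not induct on $k$: it applies the recursion \eqref{eq:Hkrec} a single time, writes $\frac{H_k(n,\mathcal{P})}{n^s} = \sum_{d\mid n,\ d\in\mathcal{P}} \frac{1}{d^s}\cdot\frac{H_{k-1}(n/d,\mathcal{P})}{(n/d)^s}$, extracts the factor $\varrho^{-s}$ from the divisor itself via $d^{-s} \leq \varrho^{-s}$ for $d \in \mathcal{P}$, and then bounds the remaining sum (injectively re-indexed by $m = n/d$) by the \emph{full} Dirichlet series $\sum_{m\geq 1} H_{k-1}(m,\mathcal{P})/m^s$, which equals $\zeta_\mathcal{P}(s)^{k-1}$ by the closed form \eqref{eq:DirichletH0}. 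You never invoke \eqref{eq:DirichletH0}: you run an induction on $k$ in the style of Lemma \ref{lem:upperH0}, harvesting one power of $\zeta_\mathcal{P}(s)$ per step from $\sum_{d\mid n,\ d\in\mathcal{P}} d^{-s} \leq \zeta_\mathcal{P}(s)$, with the $\varrho^{-s}$ gain originating in the base case $k=1$ from $n \geq \varrho$ for $n \in \mathcal{P}$, rather than from an extracted divisor. In effect your induction re-derives the content of \eqref{eq:DirichletH0} on the fly, which makes the argument self-contained and slightly more elementary, at the price of the $n=1$ bookkeeping that you correctly identify and handle ($H_j(1,\mathcal{P}) = 0$ for $j \geq 1$, and the right-hand side stays positive there); the paper's version is shorter once \eqref{eq:DirichletH0} is on record, and needs no statement at $n=1$ since the series completion absorbs that term. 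One shared caveat worth recording: both your base case (the step $1 \leq (n/\varrho)^s$) and the paper's divisor extraction ($d^{-s} \leq \varrho^{-s}$) implicitly use $s \geq 0$, which is not forced by the stated hypothesis $s > \sigma_\mathcal{P}$ when $\mathcal{P}$ is finite (then $\sigma_\mathcal{P} = -\infty$, and the bound genuinely fails for negative $s$, e.g.\ $\mathcal{P} = \{2,3\}$, $n = 3$, $k = 1$, $s = -1$ gives $H_1(3,\mathcal{P}) = 1 > 2/3$); the paper covers this by its standing restriction to real $s \geq 0$ announced before the lemma, and your write-up inherits exactly the same restriction, so this is not a gap relative to the paper.
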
	
\begin{proof}
	Inequality \eqref{eq:upperH0} is trivial for $k=1$, see \eqref{eq:Hkrec1}.
	Assuming $k \geq 2$, we use \eqref{eq:Hkrec} and \eqref{eq:DirichletH0} to deduce that 
	\begin{align*}
	\frac{H_k(n, \mathcal{P})}{n^s} = \sum_{\substack{d|n\\ d \in \mathcal{P}}} \left(\frac{d}{n}\right)^s \frac{H_{k-1}\left(\frac{n}{d},\mathcal{P}\right)}{d^s}
	\leq
	\frac{1}{\varrho^s} \sum_{m=1}^{\infty}\frac{H_{k-1}(m, \mathcal{P})}{m^s}
	=
	\frac{\zeta_\mathcal{P}(s)^{k-1}}{\varrho^s},
	\quad n \geq 2,
	\end{align*}
	where we applied the inequality $d \geq \varrho$ for $d \in \mathcal{P}$.
\end{proof}
	
Let us provide two corollaries of Lemmas \ref{lem:upperH0} and \ref{lem:upperH} where we treat the cases $\mathcal{P} = \mathbb{N}_2$ and $\mathcal{P} = \mathbb{N}_3^{\text{odd}} = \{3,5,7,\dots\}$ which is the set of all odd natural numbers except $1$. The latter case occurs naturally in the basisness and completeness problems, see Section \ref{sec:fodd} for further discussion.
One can show (cf.\ \cite[Proposition 1]{chor}) that
\begin{equation}\label{eq:zetaodd}
\zeta_{\mathbb{N}_3^{\text{odd}}}(s) 
= 
\sum_{\substack{m \geq 3\\ m ~\text{odd}}} \frac{1}{m^s}
=
\prod_{\substack{p ~\text{prime}\\ p \geq 3}} \frac{1}{1-p^{-s}} - 1
=
\left(1-\frac{1}{2^s}\right)\zeta(s)-1,
\end{equation}
and find that $\rho(\mathbb{N}_3^{\text{odd}}) = 1.37779\dots$
\begin{corollary}\label{cor:lem:Hodd}
	Let $n \geq 2$. 
	Then $H(n) \leq n^\rho$ for all $n \geq 2$, where $\rho = \rho(\mathbb{N}_2) = 1.72865\dots$ 
	If, in addition, $n$ is odd, then $H(n) \leq n^\eta$, where $\eta = \rho(\mathbb{N}_3^{\text{odd}}) = 1.37779\dots$
\end{corollary}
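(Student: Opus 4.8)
The plan is to obtain both inequalities as immediate applications of Lemma \ref{lem:upperH0}, with the entire content residing in choosing the correct set $\mathcal{P}$ in each case and recording the corresponding root $\rho(\mathcal{P})$.

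For the first inequality I would simply take $\mathcal{P} = \mathbb{N}_2$. Since $H(n, \mathbb{N}_2) = H(n)$ by the convention fixed right after the definition of $H(n,\mathcal{P})$, Lemma \ref{lem:upperH0} yields at once $H(n) \leq n^{\rho(\mathbb{N}_2)} = n^{\rho}$ for every $n \geq 2$. The numerical value $\rho = 1.72865\dots$ is then the real root of $\zeta_{\mathbb{N}_2}(s) = \zeta(s) - 1 = 1$, i.e.\ of $\zeta(s) = 2$, which is uniquely determined and locatable to any precision since $\zeta_{\mathbb{N}_2}$ is strictly decreasing on the relevant range.

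The second inequality requires one extra combinatorial observation, which is the only point that needs genuine justification. When $n$ is odd, every factor in any ordered factorization $d_1 \cdots d_k = n$ with $d_i \geq 2$ divides $n$ and is therefore odd; being odd and at least $2$, it is at least $3$, so $d_i \in \{3,5,7,\dots\} = \mathbb{N}_3^{\text{odd}}$. Conversely, any factorization of $n$ into factors from $\mathbb{N}_3^{\text{odd}}$ is in particular a factorization into integers $\geq 2$. Hence the two families of factorizations coincide, giving $H(n) = H(n, \mathbb{N}_2) = H(n, \mathbb{N}_3^{\text{odd}})$ for odd $n$. Applying Lemma \ref{lem:upperH0} with $\mathcal{P} = \mathbb{N}_3^{\text{odd}}$ then produces $H(n) \leq n^{\rho(\mathbb{N}_3^{\text{odd}})} = n^{\eta}$, and the value $\eta = 1.37779\dots$ is the root of $\zeta_{\mathbb{N}_3^{\text{odd}}}(s) = 1$ with $\zeta_{\mathbb{N}_3^{\text{odd}}}(s)$ given in closed form by \eqref{eq:zetaodd}.

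Because both claims collapse to Lemma \ref{lem:upperH0}, I do not expect any serious obstacle. The only step demanding care is the parity argument establishing $H(n, \mathbb{N}_2) = H(n, \mathbb{N}_3^{\text{odd}})$ for odd $n$, which must be argued explicitly rather than taken for granted; everything else is a direct substitution into the lemma together with the numerical solution of the two monotone equations defining $\rho$ and $\eta$.
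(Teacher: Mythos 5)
Your proposal is correct and follows exactly the route the paper intends: the corollary is stated as an immediate consequence of Lemma \ref{lem:upperH0}, applied with $\mathcal{P} = \mathbb{N}_2$ for general $n$ and with $\mathcal{P} = \mathbb{N}_3^{\text{odd}}$ for odd $n$, using the closed form \eqref{eq:zetaodd} to locate $\eta$. Your explicit parity argument showing $H(n) = H(n, \mathbb{N}_3^{\text{odd}})$ for odd $n$ is precisely the (unstated) observation underlying the paper's second claim, so there is nothing to add.
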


\begin{remark}\label{rem:Hn}
	In fact, the inequalities in Corollary \ref{cor:lem:Hodd} are strict, see \cite[Theorem 5]{chor}. 
	Furthermore, the growth rate $n^\rho$ is optimal. For any $\varepsilon>0$ there exist infinitely many $n$ such that $H(n) > n^{\rho-\varepsilon}$, see \cite{hille} and also \cite[Section 3]{cop} for an explicit construction.
\end{remark}

\begin{corollary}\label{cor:lem:H}
	Let $s>1$, $n \geq 2$, and $k \geq 1$. Then
	\begin{equation}\label{eq:upperH}
	H_k(n) \leq \frac{\left(\zeta(s)-1\right)^{k-1} n^s}{2^s}.
	\end{equation}
	Moreover, if $n$ is odd, then
	\begin{equation*}\label{eq:upperHodd}
	H_k(n) \leq \frac{\left(\left(1-\frac{1}{2^s}\right)\zeta(s)-1\right)^{k-1} n^s}{3^s}.
	\end{equation*}
\end{corollary}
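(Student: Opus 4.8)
The plan is to obtain both inequalities as immediate specializations of Lemma \ref{lem:upperH} to the two sets $\mathcal{P} = \mathbb{N}_2$ and $\mathcal{P} = \mathbb{N}_3^{\text{odd}}$, for which the relevant data (the Dirichlet series $\zeta_\mathcal{P}(s)$, its abscissa of convergence $\sigma_\mathcal{P}$, and the minimal element $\varrho$) have already been recorded in the excerpt.

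For the first inequality I would take $\mathcal{P} = \mathbb{N}_2$, so that by definition $H_k(n) = H_k(n, \mathbb{N}_2)$. The associated Dirichlet series is $\zeta_{\mathbb{N}_2}(s) = \zeta(s) - 1$, whose abscissa of convergence coincides with that of the Riemann zeta function, namely $\sigma_{\mathbb{N}_2} = 1$, and the minimal element of $\mathbb{N}_2$ is $\varrho = 2$. Substituting these three quantities into \eqref{eq:upperH0} yields \eqref{eq:upperH} at once for every $s > 1$.

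For the second inequality the key observation is that when $n$ is odd, every divisor of $n$ is odd; since in any factorization $d_1 \cdots d_k = n$ each $d_i$ divides $n$ and is required to satisfy $d_i \geq 2$, each factor is in fact an odd number $\geq 3$, i.e.\ an element of $\mathbb{N}_3^{\text{odd}}$. Hence $H_k(n) = H_k(n, \mathbb{N}_3^{\text{odd}})$ for odd $n$. I would then apply Lemma \ref{lem:upperH} with $\mathcal{P} = \mathbb{N}_3^{\text{odd}}$: by \eqref{eq:zetaodd} we have $\zeta_{\mathbb{N}_3^{\text{odd}}}(s) = \bigl(1 - 2^{-s}\bigr)\zeta(s) - 1$, again with abscissa of convergence $\sigma_{\mathbb{N}_3^{\text{odd}}} = 1$, while the minimal element is $\varrho = 3$. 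Plugging these into \eqref{eq:upperH0} gives precisely the claimed bound for odd $n$.

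No genuine obstacle arises: the argument is a two-line bookkeeping substitution into Lemma \ref{lem:upperH}. The only point requiring a moment's attention is the identity $H_k(n) = H_k(n, \mathbb{N}_3^{\text{odd}})$ for odd $n$, which is the elementary remark that a factorization of an odd number into parts $\geq 2$ can only use odd parts $\geq 3$.
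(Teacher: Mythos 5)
Your proof is correct and takes essentially the same route as the paper: Corollary \ref{cor:lem:H} is presented there precisely as a specialization of Lemma \ref{lem:upperH} to $\mathcal{P}=\mathbb{N}_2$ (with $\zeta_\mathcal{P}(s)=\zeta(s)-1$, $\varrho=2$) and to $\mathcal{P}=\mathbb{N}_3^{\text{odd}}$ (with $\zeta_\mathcal{P}(s)=\bigl(1-\tfrac{1}{2^s}\bigr)\zeta(s)-1$ by \eqref{eq:zetaodd}, $\varrho=3$), together with your observation that all factors in a factorization of an odd $n$ are odd and hence at least $3$. Nothing further is needed.
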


\section{Asymptotics of \texorpdfstring{$f^{-1}(n)$}{f-1(n)}}\label{sec:decay}
In this section, we state and prove our main results concerning the behaviour of $f^{-1}(n)$. 

\subsection{Weakening of the total multiplicativity}\label{sec:subsupermult}
First, we weaken the total multiplicativity assumption \eqref{eq:total} in the following two ways.
We say that the absolute value of $f(n)$ is \textit{supermultiplicative} if 
\begin{equation}\label{eq:submult}
|f(m)| |f(n)| \leq |f(mn)| 
\quad \text{for all}~ m,n \in \mathbb{N},
\end{equation}
and it is \textit{submultiplicative} if 
\begin{equation}\label{eq:submult1}
|f(m)| |f(n)| \geq |f(mn)| 
\quad \text{for all}~ m,n \in \mathbb{N}.
\end{equation}

Recalling that we always assume $f(1)=1$, we use \eqref{eq:Dirinv1} to derive that if $|f(n)|$ is supermultiplicative, then 
$$
|f^{-1}(n)| \leq H(n) |f(n)|, \quad n \geq 2,
$$
while if $|f(n)|$ is submultiplicative, then 
\begin{equation}\label{eq:submult2}
|f^{-1}(n)| \leq H(n) \prod_{j=1}^{\omega(n)} |f(p_j)|^{e_j}, \quad n \geq 2.
\end{equation}
Here and below, we write arbitrary $n \in \mathbb{N}$ as its prime decomposition 
$$
n=p_1^{e_1}\cdots p_{\omega(n)}^{e_{\omega(n)}},
$$
where $\omega(n)$ stands for the number of distinct prime factors of $n$.

In particular, if $f(n)$ has a polynomial behaviour, then, in view of Corollary \ref{cor:lem:Hodd},  \eqref{eq:submult2} can be estimated from above in the following more explicit way. 
\begin{proposition}\label{prop:submult}
	Let $|f(n)|$ be submultiplicative. Assume that there exist $C >0$ and $\gamma \in \mathbb{R}$ such that $|f(n)| \leq C n^\gamma$ for all $n\geq 2$. Then
	\begin{equation}\label{eq:submult3}
	|f^{-1}(n)| \leq H(n) C^{\Omega(n)} n^{\gamma} \leq C^{\Omega(n)} n^{\gamma+\rho},
	\quad
	n \geq 2,
	\end{equation}
	where $\rho = 1.72865\dots$ is the unique root of $\zeta(s)=2$.
\end{proposition}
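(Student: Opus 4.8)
The plan is to start directly from the submultiplicative estimate \eqref{eq:submult2}, which already expresses $|f^{-1}(n)|$ through $H(n)$ and the values of $|f|$ at the primes dividing $n$, and then to feed in the polynomial growth hypothesis one prime power at a time. Writing $n = p_1^{e_1}\cdots p_{\omega(n)}^{e_{\omega(n)}}$ in its prime decomposition, I would replace each factor $|f(p_j)|$ by its upper bound $C p_j^{\gamma}$, obtaining
$$
\prod_{j=1}^{\omega(n)} |f(p_j)|^{e_j} \leq \prod_{j=1}^{\omega(n)} \bigl(C p_j^{\gamma}\bigr)^{e_j} = C^{\sum_{j} e_j} \Bigl(\prod_{j} p_j^{e_j}\Bigr)^{\gamma}.
$$

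The essential algebraic observation is that the two exponents on the right collapse to familiar quantities: $\sum_{j=1}^{\omega(n)} e_j = \Omega(n)$ is exactly the number of prime factors of $n$ counted with multiplicity, while $\prod_{j=1}^{\omega(n)} p_j^{e_j} = n$ by the definition of the prime decomposition. This gives $\prod_{j} |f(p_j)|^{e_j} \leq C^{\Omega(n)} n^{\gamma}$, and substituting back into \eqref{eq:submult2} yields the first inequality in \eqref{eq:submult3}.

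For the second inequality I would simply invoke Corollary \ref{cor:lem:Hodd}, which provides $H(n) \leq n^{\rho}$ with $\rho = \rho(\mathbb{N}_2)$ the root of $\zeta_{\mathbb{N}_2}(s) = \zeta(s)-1 = 1$, i.e.\ of $\zeta(s) = 2$; multiplying the bound $H(n) C^{\Omega(n)} n^{\gamma}$ by the replacement $H(n) \leq n^{\rho}$ turns it into $C^{\Omega(n)} n^{\gamma+\rho}$ and closes the chain.

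Since every step is either a direct substitution of the growth hypothesis or an application of a result already established in the excerpt, I do not expect any genuine obstacle. The only point that warrants care is the exponent bookkeeping: one must read the sum of the $e_j$ as $\Omega(n)$ rather than $\omega(n)$, and verify that the product of the prime powers reconstitutes $n$ exactly, so that the second exponent is precisely $\gamma$. The proposition is thus essentially a clean specialization of the submultiplicative estimate to polynomially bounded $f$.
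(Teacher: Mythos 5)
Your proposal is correct and coincides with the paper's own (implicit) argument: the paper presents Proposition~\ref{prop:submult} as a direct consequence of the submultiplicative estimate \eqref{eq:submult2} combined with the bounds $|f(p_j)| \leq C p_j^{\gamma}$ and $H(n) \leq n^{\rho}$ from Corollary~\ref{cor:lem:Hodd}, exactly as you do. Your exponent bookkeeping ($\sum_j e_j = \Omega(n)$ and $\prod_j p_j^{e_j} = n$) is accurate, so there is nothing to add.
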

We refer to Remark \ref{rem:optimal} below for a discussion of the optimality of \eqref{eq:submult3}.

\medskip
Let us consider another weakening of \eqref{eq:total}.
We call $f(n)$ \textit{multiplicative} if 
\begin{equation}\label{eq:mult}
f(m)f(n) = f(mn) 
\quad \text{for all coprime}~ m,n \in \mathbb{N}.
\end{equation}
Note that the Dirichlet inverse $f^{-1}(n)$ of a multiplicative $f(n)$ is also multiplicative (see, e.g., \cite[Theorem 2.16]{apostol}), that is, 
\begin{equation}\label{eq:f-1compl0}
f^{-1}(n)
=
f^{-1}\left(\prod_{j=1}^{\omega(n)}p_j^{e_j}\right)
=
\prod_{j=1}^{\omega(n)}f^{-1}(p_j^{e_j}).
\end{equation}
In its turn, the Dirichlet inverse for prime powers can be found recursively as follows (see \eqref{eq:DI}):
\begin{equation}\label{eq:DImult}
f^{-1}(p) = -f(p) \quad \text{and} \quad 
f^{-1}(p^k) = -\sum_{m=0}^{k-1} f(p^{k-m}) f^{-1}(p^m),
\quad k \geq 2.
\end{equation}
Let us also refer the reader to \cite[(2.5)-(2.7)]{hauk} for nonrecurent expressions of $f^{-1}(p^k)$.
Notice that all interim results for powers of primes that we present in this section are also valid for non-multiplicative $f(n)$.

We divide our results on the multiplicative case into two subsections according to polynomial and exponential behaviour of $f(n)$.
\subsubsection{Polynomial behaviour}
Along this subsection, we will assume that $|f(n)| \leq C n^\gamma$ for some $C>0$, $\gamma \in \mathbb{R}$, and all $n \geq 2$. 
We start from the following general result.
\begin{proposition}\label{prop:mult}
	Let $f(n)$ be multiplicative. Assume that there exist $C >0$ and $\gamma \in \mathbb{R}$ such that $|f(n)| \leq C n^\gamma$ for all $n\geq 2$. Then
	\begin{equation*}
	|f^{-1}(n)| \leq \left(\frac{C}{C+1}\right)^{\omega(n)}(C+1)^{\Omega(n)} n^{\gamma},
	\quad
	n \geq 2.
	\end{equation*}
\end{proposition}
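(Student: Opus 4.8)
The plan is to exploit the multiplicativity of $f^{-1}(n)$ recorded in \eqref{eq:f-1compl0}, which reduces the whole estimate to a bound on the prime-power values $|f^{-1}(p^k)|$. Since $|f^{-1}(n)| = \prod_{j=1}^{\omega(n)} |f^{-1}(p_j^{e_j})|$, it suffices to establish a clean bound of the form
\[
|f^{-1}(p^k)| \leq C (C+1)^{k-1} p^{k\gamma}
\]
for every prime $p$ and every $k \geq 1$. Indeed, multiplying these inequalities over the $\omega(n)$ distinct prime factors of $n$ and using $\sum_j e_j = \Omega(n)$, $\sum_j 1 = \omega(n)$, and $\prod_j p_j^{e_j} = n$ yields
\[
|f^{-1}(n)| \leq C^{\omega(n)} (C+1)^{\Omega(n)-\omega(n)} n^\gamma = \left(\frac{C}{C+1}\right)^{\omega(n)} (C+1)^{\Omega(n)} n^\gamma,
\]
which is exactly the claimed inequality. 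Note that, in contrast to the submultiplicative case leading to \eqref{eq:submult3}, this argument does not invoke the factorization counts $H(n)$ at all but proceeds directly from the recursion.

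First I would prove the prime-power bound by induction on $k$ using \eqref{eq:DImult}. The base case $k=1$ is immediate from $f^{-1}(p) = -f(p)$ together with $|f(p)| \leq C p^\gamma$. For the inductive step I would take absolute values in $f^{-1}(p^k) = -\sum_{m=0}^{k-1} f(p^{k-m}) f^{-1}(p^m)$, separate off the $m=0$ term (where $f^{-1}(1)=1$ and $|f(p^k)| \leq C p^{k\gamma}$), and insert the induction hypothesis $|f^{-1}(p^m)| \leq C(C+1)^{m-1} p^{m\gamma}$ for $1 \leq m \leq k-1$ together with $|f(p^{k-m})| \leq C p^{(k-m)\gamma}$ (valid since $k-m \geq 1$ throughout the sum). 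A convenient feature is that every summand carries the common factor $p^{k\gamma}$, so the estimate collapses to $p^{k\gamma}$ times a purely numerical expression.

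The heart of the computation is then the geometric sum $\sum_{m=1}^{k-1} (C+1)^{m-1} = \bigl((C+1)^{k-1}-1\bigr)/C$, after which the numerical factor simplifies to $C\bigl(1 + (C+1)^{k-1} - 1\bigr) = C(C+1)^{k-1}$, reproducing the bound exactly and closing the induction. I do not expect a genuine obstacle here: the argument is a short induction, and the only real care lies in choosing the prime-power bound with the precise constant $C(C+1)^{k-1}$, so that the geometric sum telescopes cleanly and the induction step returns the same inequality without any loss. Having secured the prime-power estimate, the passage to general $n$ is the routine multiplicative bookkeeping indicated above.
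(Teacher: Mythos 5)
Your proposal is correct and follows essentially the same route as the paper: an induction on $k$ via \eqref{eq:DImult} establishing $|f^{-1}(p^k)| \leq C(C+1)^{k-1}p^{k\gamma}$, with the geometric sum collapsing exactly as in the paper's inductive step, followed by the same multiplicative assembly through \eqref{eq:f-1compl0}. There is nothing to correct.
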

\begin{proof}
	Assume first the case $n=p^k$ for prime $p\geq2$ and natural $k \geq 1$.
	Let us argue by induction with respect to $k$. To show the base of induction, we recall that $f^{-1}(p) = -f(p)$, which implies $|f^{-1}(p)| = |f(p)| \leq C p^\gamma$, see \eqref{eq:DImult}.
	As the hypothesis of induction, we assume that $|f^{-1}(p^m)| \leq C(C+1)^{m-1} p^{m\gamma}$ for all $m \leq k-1$.
	We perform the inductive step using \eqref{eq:DImult}:
	\begin{align*}
	|f^{-1}(p^k)|
	&\leq
	|f(p^k)| + \sum_{m=1}^{k-1}|f(p^{k-m})||f^{-1}(p^m)|
	\\
	&\leq
	Cp^{k\gamma} + C^2p^{k\gamma}\sum_{m=0}^{k-2}(C+1)^m
	=
	C(C+1)^{k-1}p^{k\gamma}.
	\end{align*}
	Hence, we have $|f^{-1}(p^k)| \leq C(C+1)^{k-1} p^{k\gamma}$ for all powers of primes. 
	Therefore, since $f^{-1}(n)$ is multiplicative, we derive from \eqref{eq:f-1compl0} that 
	$$
	|f^{-1}(n)|
	=
	\prod_{j=1}^{\omega(n)} |f^{-1}(p_j^{e_j})|
	\leq
	\prod_{j=1}^{\omega(n)} C(C+1)^{e_j-1}p_j^{e_j\gamma}
	=
	\left(\frac{C}{C+1}\right)^{\omega(n)}(C+1)^{\Omega(n)} n^{\gamma}
	$$
	for arbitrary natural $n \geq 2$.
\end{proof}
Since $\Omega(n)$ possesses the upper bound $\frac{\ln n}{\ln 2}$, we can simplify the statement of Proposition \ref{prop:mult}. The resulting corollary shows that the asymptotic behavior of the Dirichlet inverse $f^{-1}(n)$ depends directly\label{key} on the value of the constant $C>0$.
\begin{corollary}
	Let $f(n)$ be multiplicative. Assume that there exist $C >0$ and $\gamma \in \mathbb{R}$ such that $|f(n)| \leq C n^\gamma$ for all $n\geq 2$. Then $|f^{-1}(n)| \leq n^{\gamma+\frac{\ln(1+C)}{\ln 2}}$ for all $n \geq 2$.
\end{corollary}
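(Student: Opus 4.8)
The plan is to start directly from the bound established in Proposition \ref{prop:mult}, namely
\[
|f^{-1}(n)| \leq \left(\frac{C}{C+1}\right)^{\omega(n)}(C+1)^{\Omega(n)} n^{\gamma},
\quad n \geq 2,
\]
and to absorb the two combinatorial factors into a single power of $n$. The key arithmetic input is the elementary inequality $\Omega(n) \leq \frac{\ln n}{\ln 2}$, which follows at once from the observation that every prime factor of $n$ is at least $2$, so that $n \geq 2^{\Omega(n)}$ and hence $\ln n \geq \Omega(n) \ln 2$.

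First I would dispose of the factor $\left(\frac{C}{C+1}\right)^{\omega(n)}$. Since $C>0$, we have $0 < \frac{C}{C+1} < 1$, and because $\omega(n) \geq 1$ for every $n \geq 2$, this factor is bounded above by $1$ and may simply be dropped from the estimate.

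Next I would rewrite the remaining factor $(C+1)^{\Omega(n)}$ as a power of $n$. Here the point is that $C+1>1$, so the map $x \mapsto (C+1)^x$ is increasing; combined with the bound $\Omega(n) \leq \frac{\ln n}{\ln 2}$ this yields
\[
(C+1)^{\Omega(n)}
\leq
(C+1)^{\frac{\ln n}{\ln 2}}
=
\exp\!\left(\frac{\ln(C+1)}{\ln 2}\,\ln n\right)
=
n^{\frac{\ln(C+1)}{\ln 2}}.
\]
Multiplying this by the surviving factor $n^\gamma$ gives $|f^{-1}(n)| \leq n^{\gamma + \frac{\ln(1+C)}{\ln 2}}$, which is the asserted bound.

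There is no serious obstacle here, as the statement is a direct simplification of Proposition \ref{prop:mult}. The only points requiring a modicum of care are to verify that the base $C+1$ exceeds $1$, so that monotonicity of the exponential is invoked in the correct direction, and that the suppressed factor is genuinely at most $1$.
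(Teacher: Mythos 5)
Your proof is correct and follows exactly the route the paper intends: the text preceding the corollary states that it is obtained from Proposition~\ref{prop:mult} by using the bound $\Omega(n) \leq \frac{\ln n}{\ln 2}$, discarding the factor $\left(\frac{C}{C+1}\right)^{\omega(n)} \leq 1$ and converting $(C+1)^{\Omega(n)}$ into $n^{\frac{\ln(1+C)}{\ln 2}}$, precisely as you do. Nothing to add.
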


\begin{remark}
	The upper bound obtained by Proposition \ref{prop:mult} is optimal. 
	Indeed, fix any $\gamma \in \mathbb{R}$ and $C>0$, and define $f(n)$ by $f(2^k) = -C 2^{k \gamma}$, $k \geq 1$, and $f(n)=0$ if $n > 2$ is not a power of $2$. 
	We see that such $f(n)$ is multiplicative.
	Using \eqref{eq:DImult}, we obtain by induction that
	$$
	f^{-1}(2^k)
	= 
	C \sum_{m=0}^{k-1} 2^{(k-m)\gamma} f^{-1}(p^m)
	=
	C 2^{k\gamma} + C^2 \sum_{m=1}^{k-1}(C+1)^{m-1} 2^{k\gamma}
	=
	C(C+1)^{k-1}2^{k\gamma}
	$$
	for all $k \geq 1$, 
	and $f^{-1}(n)=0$ for all other natural numbers $n>2$, which means the claimed optimality. 
\end{remark}

\medskip
By imposing additional assumptions on $f(n)$, one can improve the upper bound in Proposition \ref{prop:mult}.
For instance, assume that $f(n)$ is multiplicative and $f(p^k)=0$ for all primes $p \geq 2$ and naturals $k\geq 2$. 
We obtain recursively from \eqref{eq:DImult} that $f^{-1}(p^k)=(-1)^kf(p)^k$ for $k \geq 1$.
Therefore, for arbitrary natural $n \geq 2$, \eqref{eq:f-1compl0} implies
\begin{equation}\label{eq:f-1compl}
f^{-1}(n)
=
\prod_{j=1}^{\omega(n)}f^{-1}(p_j^{e_j})
=
(-1)^{\Omega(n)}\prod_{j=1}^{\omega(n)}f(p_j)^{e_j},
\end{equation}
which shows that $f^{-1}(n)$ is totally multiplicative, cf.\ \cite[Exercise 26, p.\ 49]{apostol}.
Let us remark that this is a complementation to the totally multiplicative case discussed in Section \ref{sec:intro}.
As a consequence of \eqref{eq:f-1compl}, we get the following result.
\begin{proposition}\label{prop:mult2}
	Let $f(n)$ be multiplicative and $f(p^k)=0$ for all prime powers $p^k$ with $k\geq 2$.
	Assume that there exist $C >0$ and $\gamma \in \mathbb{R}$ such that $|f(n)| \leq C n^\gamma$ for all $n\geq 2$. Then
	$$
	|f^{-1}(n)|
	\leq
	C^{\Omega(n)}n^{\gamma},
	\quad
	n \geq 2.
	$$
\end{proposition}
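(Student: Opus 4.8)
The plan is to exploit the explicit product formula \eqref{eq:f-1compl} derived immediately above, which under the present hypotheses expresses the Dirichlet inverse as the totally multiplicative function
\begin{equation*}
f^{-1}(n) = (-1)^{\Omega(n)}\prod_{j=1}^{\omega(n)}f(p_j)^{e_j}.
\end{equation*}
Since this identity already reduces the problem to the values of $f$ at the primes dividing $n$, the remaining work is purely a matter of estimating each factor and collecting exponents; no induction or recursion over prime powers is needed.

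First I would take absolute values of \eqref{eq:f-1compl}. The sign factor $(-1)^{\Omega(n)}$ disappears and the product splits over the distinct prime factors, giving
\begin{equation*}
|f^{-1}(n)| = \prod_{j=1}^{\omega(n)}|f(p_j)|^{e_j}.
\end{equation*}
Here it is essential that each base $p_j$ is itself a prime $\geq 2$, so that the polynomial hypothesis $|f(m)| \leq C m^\gamma$ applies directly to each $f(p_j)$. Substituting this bound into every factor yields
\begin{equation*}
|f^{-1}(n)| \leq \prod_{j=1}^{\omega(n)}\left(C p_j^\gamma\right)^{e_j} = \left(\prod_{j=1}^{\omega(n)} C^{e_j}\right)\left(\prod_{j=1}^{\omega(n)} p_j^{e_j}\right)^{\gamma}.
\end{equation*}

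The two remaining products are then evaluated by bookkeeping of exponents: the sum $\sum_{j=1}^{\omega(n)} e_j$ equals $\Omega(n)$ by definition, so the first product is $C^{\Omega(n)}$, while the second product is exactly the prime decomposition of $n$ and hence equals $n^\gamma$. This delivers the claimed bound $|f^{-1}(n)| \leq C^{\Omega(n)} n^\gamma$.

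I do not anticipate any genuine obstacle, since the structural simplification has already been carried out in \eqref{eq:f-1compl}. The only point requiring slight care is that the estimate $|f(p_j)| \leq C p_j^\gamma$ must be applied at the primes $p_j$ rather than at the prime powers $p_j^{e_j}$; this is precisely where the vanishing hypothesis $f(p^k)=0$ for $k \geq 2$ is decisive, as it is what collapses $f^{-1}$ into the totally multiplicative form above. Once that form is in hand, the proof is a one-line application of the polynomial hypothesis followed by the identity $\Omega(n) = \sum_{j=1}^{\omega(n)} e_j$.
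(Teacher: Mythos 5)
Your proposal is correct and matches the paper's own argument: the paper derives the totally multiplicative formula \eqref{eq:f-1compl} immediately before the proposition and states the bound as its direct consequence, exactly as you do by taking absolute values, applying $|f(p_j)| \leq C p_j^\gamma$ at each prime, and using $\sum_{j=1}^{\omega(n)} e_j = \Omega(n)$. No gap; your write-up simply makes explicit the one-line estimate the paper leaves implicit.
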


\subsubsection{Exponential behaviour}
Along this subsection, we will assume that $|f(n)| \leq A c^n$ for some $A, c>0$ and all $n \geq 2$. 
Let us denote by $\mathfrak{P}(m)$ the set of all partitions of $m \in \mathbb{N}$ and assume, without loss of generality, that each entry of $\mathfrak{P}(m)$ is arranged in the decreasing order, e.g., 
$$
\mathfrak{P}(5) = \left\{(5),(4,1),(3,2),(3,1,1),(2,2,1),(2,1,1,1),(1,1,1,1,1)\right\}.
$$
Using the recursive formula \eqref{eq:DImult}, one can derive the following upper bound for the Dirichlet inverse for prime powers:
\begin{equation}\label{eq:upper:f-1:expt}
|f^{-1}(p^k)| 
\leq 
\sum_{(\phi_1,\phi_2,\dots,\phi_l) \in \mathfrak{P}(k)} 
\binom{l}{l_1,l_2,\dots,l_m}
A^l
\prod_{j=1}^l c^{p^{\phi_j}},
\end{equation}
where $l_1, l_2, \dots, l_m \geq 1$ are such that $l_1+\dots+l_{m} = l$ and
$$
\phi_1 = \dots = \phi_{l_1} > \phi_{l_1+1} = \dots = \phi_{l_1+l_2} > \dots > \phi_{l_1+\dots+l_{m-1}+1} = \dots = \phi_{l_1+\dots+l_{m}} = \phi_l,
$$
and $\binom{l}{l_1,l_2,\dots,l_m}$ is the multinomial coefficient. 
Let us remark that the inequality \eqref{eq:upper:f-1:expt} turns to equality for any function $f(n)$ satisfying $f(p^k)=-A c^{p^k}$ for prime powers $p^k$, $k \geq 1$. 
That is, \eqref{eq:upper:f-1:expt} gives a sharp upper bound for the Dirichlet inverse for prime powers.

If we assume now that $f(n)$ is multiplicative, then \eqref{eq:f-1compl0} and \eqref{eq:upper:f-1:expt} yield
\begin{equation}\label{eq:upper:f-1:expt1}
|f^{-1}(n)| 
\leq 
\prod_{i=1}^{\omega(n)}\sum_{(\phi_1,\phi_2,\dots,\phi_l) \in \mathfrak{P}(e_i)} 
\binom{l}{l_1,l_2,\dots,l_m}
A^l
\prod_{j=1}^l c^{p_i^{\phi_j}}, 
\quad n \geq 2.
\end{equation}
Although this upper bound is optimal and explicit, its application to particular choices of $f(n)$ can be complicated. 
Let us provide a simpler upper bound for \eqref{eq:upper:f-1:expt1} in the case $c \in (0,1)$.
\begin{proposition}\label{prop:mult:exp}
	Let $f(n)$ be multiplicative. Assume that there exist $A>0$ and $c \in (0,1)$ such that $|f(n)| \leq A c^n$ for all $n\geq 2$. Then
	\begin{equation}\label{eq:expf1}
	|f^{-1}(n)|
	\leq
	\left(\frac{A}{A+1}\right)^{\omega(n)}(A+1)^{\Omega(n)} n^\frac{3 \ln c}{\ln 3}, \quad n \geq 2.
	\end{equation}
\end{proposition}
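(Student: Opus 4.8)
The plan is to reduce the exponential hypothesis back to the polynomial one and then invoke Proposition~\ref{prop:mult} verbatim. The key observation is that the right-hand side of \eqref{eq:expf1} is exactly the bound produced by Proposition~\ref{prop:mult} with the choices $C = A$ and $\gamma = \frac{3\ln c}{\ln 3}$. Hence it suffices to show that for these functions the exponential majorant $A c^n$ may be replaced by the power majorant $A\, n^{\gamma}$ with this particular $\gamma$, i.e., to establish the elementary inequality
\begin{equation*}
c^n \leq n^{\frac{3\ln c}{\ln 3}}, \quad n \geq 2.
\end{equation*}

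First I would take logarithms of both (positive) sides. Since $c \in (0,1)$ we have $\ln c < 0$, so dividing through by $\ln c$ reverses the inequality and the claim becomes equivalent to
\begin{equation*}
\frac{n}{\ln n} \geq \frac{3}{\ln 3}, \quad n \geq 2.
\end{equation*}
Thus the whole matter rests on a single real-variable fact: the function $x \mapsto x/\ln x$ attains its minimum over the integers $n \geq 2$ at $n = 3$.

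To settle this I would analyse $h(x) = x/\ln x$ on $[2,\infty)$. Differentiation gives $h'(x) = (\ln x - 1)/(\ln x)^2$, so $h$ strictly decreases on $(2,e)$ and strictly increases on $(e,\infty)$, with its global minimum at the non-integer point $x = e \approx 2.718$. The only integers adjacent to this minimum are $n = 2$ and $n = 3$; comparing $h(2) = 2/\ln 2 = 2.885\ldots$ with $h(3) = 3/\ln 3 = 2.730\ldots$ and using the monotonicity of $h$ for $x \geq 3$ shows that $h(n) \geq h(3) = 3/\ln 3$ for every integer $n \geq 2$, which proves the displayed inequality.

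With this inequality in hand the rest is immediate: for all $n \geq 2$ we obtain $|f(n)| \leq A c^n \leq A\, n^{3\ln c/\ln 3}$, so the multiplicative $f(n)$ satisfies the hypotheses of Proposition~\ref{prop:mult} with $C = A$ and $\gamma = 3\ln c/\ln 3$, and substituting these values into its conclusion reproduces \eqref{eq:expf1} exactly. The main (and essentially only) obstacle is the optimization step isolating $n = 3$; this is precisely what dictates the otherwise peculiar exponent $3\ln c/\ln 3$, which is in fact the best power-law exponent uniformly valid for all $n \geq 2$, since $\sup_{n \geq 2} \frac{n\ln c}{\ln n}$ is attained at the integer minimizing $n/\ln n$, namely $n = 3$.
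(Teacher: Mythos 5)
Your proof is correct, but it takes a genuinely different route from the paper's. The paper works at the level of prime powers: it starts from the sharp partition bound \eqref{eq:upper:f-1:expt}, estimates each product $\prod_{j} c^{p^{\phi_j}}$ by $c^{3k\ln p/\ln 3}$ via a lemma of Jakimczuk on sums of prime powers, and evaluates the multinomial sum over partitions as the composition count $A(A+1)^{k-1}$, thereby deriving the prime-power estimate $|f^{-1}(p^k)| \leq A(A+1)^{k-1}\left(p^k\right)^{3\ln c/\ln 3}$ before multiplying over primes, exactly as in the proof of Proposition \ref{prop:mult}. You instead collapse the exponential hypothesis into the polynomial one via the single scalar inequality $c^n \leq n^{3\ln c/\ln 3}$ for integers $n \geq 2$, which your minimization of $n/\ln n$ establishes correctly (the global minimum of $h(x) = x/\ln x$ is at $x = e$, and $h(3) = 3/\ln 3 < h(2) = 2/\ln 2$, so the integer minimum is at $n = 3$), and then you invoke Proposition \ref{prop:mult} with $C = A$ and $\gamma = 3\ln c/\ln 3$, which reproduces \eqref{eq:expf1} verbatim. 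The two arguments hinge on the same extremal fact: the identity $\min_{n \geq 2} n/\ln n = 3/\ln 3$ over the integers is precisely what underlies Jakimczuk's lemma, so it is no accident that both routes land on the same exponent, and your closing observation that $3\ln c/\ln 3$ is the best uniform power-law exponent makes this transparent. What your reduction buys is brevity and self-containedness: no partition machinery, no multinomial counting, and no external lemma. What the paper's route buys is the intermediate bound \eqref{eq:upper:f-1:expt}, which is sharp for prime powers and of independent interest, and which retains the finer quantity $c^{\sum_j p^{\phi_j}}$ before any uniform estimate is applied; your global majorization $|f(n)| \leq A n^{3\ln c/\ln 3}$ discards that prime-dependent structure at the outset, though for the stated conclusion nothing is lost.
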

\begin{proof}
	Taking any prime power $p^k$, a partition $(\phi_1,\phi_2,\dots,\phi_l) \in \mathfrak{P}(k)$ as above, and applying \cite[Lemma 2.2]{jakim}, we get
	$$
	\prod_{j=1}^l c^{p_i^{\phi_j}} 
	=
	c^{\sum_{j=1}^l p^{\phi_j}}
	\leq c^\frac{3 k \ln p}{\ln 3}.
	$$
	On the other hand, we have
	\begin{equation}\label{eq:upperest1}
	\sum_{(\phi_1,\phi_2,\dots,\phi_l) \in \mathfrak{P}(k)} 
	\binom{l}{l_1,l_2,\dots,l_m} A^l
	= 
	\sum_{l=1}^k 
	\binom{k-1}{l-1} A^l
	=
	A (A+1)^{k-1} 
	\end{equation}
	since the sums in \eqref{eq:upperest1} (considered without $A^l$) correspond to the number of compositions of $k$ into exactly $l$ parts. 
	Therefore, \eqref{eq:upper:f-1:expt} implies that $|f^{-1}(p^k)| \leq A (A+1)^{k-1} c^\frac{3 k \ln p}{\ln 3}$ for all prime powers.
	Finally, using the multiplicativity of $f^{-1}(n)$, we derive the inequality \eqref{eq:expf1} similarly as in the proof of Proposition \ref{prop:mult}. 
\end{proof}
\begin{corollary}\label{cor:mult:exp}
	Let $f(n)$ be multiplicative. 
	Assume that there exist $A>0$ and $c \in (0,1)$ such that $|f(n)| \leq A c^n$ for all $n\geq 2$.  
	Then $|f^{-1}(n)| \leq n^{\frac{3 \ln c}{\ln 3}+\frac{\ln(1+A)}{\ln 2}}$ for all $n \geq 2$.
\end{corollary}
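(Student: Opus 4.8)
The plan is to derive this corollary directly from Proposition \ref{prop:mult:exp} by discarding the prefactor involving $\omega(n)$ and converting the factor $(A+1)^{\Omega(n)}$ into a power of $n$, exactly as was done for the polynomial case in the corollary immediately following Proposition \ref{prop:mult}.

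First I would observe that since $A>0$ we have $\frac{A}{A+1}\in(0,1)$, so the base does not exceed $1$ and hence $\left(\frac{A}{A+1}\right)^{\omega(n)}\le 1$ for every $n\ge 2$. Thus this factor can simply be dropped from the bound in Proposition \ref{prop:mult:exp} without reversing the inequality.

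Next I would handle the factor $(A+1)^{\Omega(n)}$. The key elementary fact is that every prime factor of $n$ is at least $2$, so $n=\prod_{j=1}^{\omega(n)}p_j^{e_j}\ge 2^{\Omega(n)}$, and taking logarithms yields $\Omega(n)\le \frac{\ln n}{\ln 2}$. Since $A>0$ forces $A+1>1$, the map $t\mapsto (A+1)^t$ is increasing, so substituting this upper bound for $\Omega(n)$ gives
\[
(A+1)^{\Omega(n)} \le (A+1)^{\frac{\ln n}{\ln 2}} = n^{\frac{\ln(1+A)}{\ln 2}}.
\]

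Combining the two observations with the remaining factor $n^{\frac{3\ln c}{\ln 3}}$ from Proposition \ref{prop:mult:exp} yields $|f^{-1}(n)|\le n^{\frac{3\ln c}{\ln 3}+\frac{\ln(1+A)}{\ln 2}}$, which is the claim. I do not expect any genuine obstacle here, since this is a routine simplification; the only points requiring a moment's care are the two monotonicity directions, namely that $\frac{A}{A+1}<1$ is what permits discarding that factor, and that $A+1>1$ is precisely what makes replacing $\Omega(n)$ by its upper bound safe (had the base been smaller than $1$, the inequality would have gone the wrong way).
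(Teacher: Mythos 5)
Your proof is correct and matches the paper's intended argument exactly: the corollary is meant to follow from Proposition \ref{prop:mult:exp} by dropping the factor $\left(\frac{A}{A+1}\right)^{\omega(n)}\le 1$ and using $\Omega(n)\le\frac{\ln n}{\ln 2}$, precisely as the paper does for the analogous corollary after Proposition \ref{prop:mult}. Both monotonicity caveats you flag are the right ones, and there is nothing to add.
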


\begin{remark}
	Even if $|f(n)|$ decays exponentially as $n \to \infty$, the same exponential decay of $|f^{-1}(n)|$ cannot be guaranteed, as was already discussed in Section \ref{sec:intro}. 
	In certain cases, a polynomial upper bound for $|f^{-1}(n)|$ gives the best achievable asymptotic behaviour. 
	To show this, we fix any $A>0$ and $c \in (0,1)$ and consider the multiplicative $f(n)$ defined as $f(2)=-A c^2$ and $f(n)=0$ for all $n \geq 3$.
	Then we see that $f^{-1}(n)=0$ for $n \neq 2^k$, and 
	$$
	f^{-1}(2^k) = A^{k} c^{2k} = 2^{\frac{k \ln A}{\ln 2} + \frac{2k \ln c}{\ln 2}}
	$$
	which reads as $f^{-1}(n) = n^\frac{\ln (Ac^2)}{\ln 2}$ for $n = 2^k$. That is, we have an explicit polynomial decay.
\end{remark}

\begin{remark}
	In the case $c>1$, the growth of $|f^{-1}(n)|$ cannot be expected to be slower than the exponential growth. 
	Let us consider the multiplicative function $f(n)$ defined by $f(2^k)=-A c^{2^k}$ for $k \geq 1$ and $f(n)=0$ for all other $n > 2$. Clearly, we have $f^{-1}(n)=0$ if $n\neq 2^k$, $f^{-1}(2)=Ac^2$, $f^{-1}(4)=Ac^4 + A^2c^4$, 
	and
	$$
	f^{-1}(2^k)=Ac^{2^k} + \mbox{lower order terms}, \quad k\geq3.
	$$
	An exponential upper bound for $|f^{-1}(n)|$ will be given for general $f(n)$ in Section \ref{sec:general:exp} below.
\end{remark}

\subsection{General case}\label{sec:general}
In this section, we consider the asymptotic behaviour of $f^{-1}(n)$ regardless the assumptions \eqref{eq:total}, \eqref{eq:submult}, \eqref{eq:submult1}, and \eqref{eq:mult}. 
Again, we divide our results into two subsections according to polynomial and exponential behaviour of $f(n)$, respectively.

\subsubsection{Polynomial behaviour}
Along this subsection, we will assume that $|f(n)| \leq C n^\gamma$ for some $C>0$, $\gamma \in \mathbb{R}$, and all $n \geq 2$. Using this upper bound and Corollary \ref{cor:lem:H}, we readily deduce from \eqref{eq:Dirinv1} that
\begin{align}\label{eq:f-1up}
\begin{split}
|f^{-1}(n)| 
&\leq 
n^\gamma \sum_{k=1}^{\Omega(n)} C^k H_k(n) 
\leq
\frac{C n^{\gamma+\varsigma}}{2^\varsigma} \sum_{k=1}^{\Omega(n)} \left(C\left(\zeta(\varsigma)-1\right)\right)^{k-1}
\\
&=
\frac{\Omega(n) \cdot C n^{\gamma+\varsigma}}{2^\varsigma}
\leq
\frac{C n^{\gamma+\varsigma} \ln n}{2^{\varsigma}\ln 2},
\quad n \geq 2,
\end{split}
\end{align}
where $\varsigma>1$ is chosen in such a way that $C\left(\zeta(\varsigma)-1\right)=1$. 
On the other hand, if $C=1$, then by Corollary \ref{cor:lem:Hodd} we obtain
$$
|f^{-1}(n)| \leq H(n) n^\gamma \leq n^{\gamma+\rho}, \quad n \geq 2,
$$
where $\rho = 1.72865\dots$ is the unique root of $\zeta(s)=2$.

Let us provide the following improvement of \eqref{eq:f-1up}.
\begin{proposition}\label{prop:f-1}
	Assume that there exist $C>0$ and $\gamma \in \mathbb{R}$ such that $|f(n)| \leq C n^\gamma$ for all $n \geq 2$. Then 
	\begin{equation}\label{eq:f-upper}
	|f^{-1}(n)| \leq n^{\gamma+\varsigma}, \quad n \geq 2,
	\end{equation}
	where $\varsigma>1$ is the unique root of $\zeta(s) = \frac{1}{C}+1$. 
\end{proposition}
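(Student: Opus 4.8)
The plan is to prove \eqref{eq:f-upper} by strong induction on $n$, working directly with the one-step recursion \eqref{eq:DI} rather than with the fully expanded formula \eqref{eq:Dirinv1}. The point is that expanding via \eqref{eq:Dirinv1} produces the sum $\sum_k C^k H_k(n)$ and hence the spurious factor $\Omega(n) \leq \ln n/\ln 2$ visible in \eqref{eq:f-1up}; the proposition asserts precisely that this logarithmic loss disappears once the induction is arranged so that the constant $C$ is absorbed exactly. The first move is to reindex \eqref{eq:DI} (recall $f(1)=1$) by $e = n/d$, turning it into $f^{-1}(n) = -\sum_{e\mid n,\, e>1} f(e)\, f^{-1}(n/e)$, so that the divisors $e$ carry the known polynomial bound on $f$ while the complementary divisors $n/e$ carry the inductive bound on $f^{-1}$.

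For the inductive step I would assume $|f^{-1}(m)| \leq m^{\gamma+\varsigma}$ for all $1 \leq m < n$ (with the convention $f^{-1}(1)=1 = 1^{\gamma+\varsigma}$), apply the triangle inequality to the reindexed recursion, and insert $|f(e)| \leq C e^\gamma$ together with $|f^{-1}(n/e)| \leq (n/e)^{\gamma+\varsigma}$. The key algebraic cancellation is $e^\gamma (n/e)^{\gamma+\varsigma} = n^{\gamma+\varsigma} e^{-\varsigma}$, valid for every real $\gamma$, which pulls the full power of $n$ out of the sum:
\begin{equation*}
|f^{-1}(n)| \leq C\, n^{\gamma+\varsigma} \sum_{\substack{e\mid n\\ e>1}} e^{-\varsigma}.
\end{equation*}
Since $\varsigma>1$ the series converges, and bounding the divisor sum by the full tail gives $\sum_{e\mid n,\, e>1} e^{-\varsigma} \leq \sum_{e\geq 2} e^{-\varsigma} = \zeta(\varsigma)-1$. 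By the defining equation $\zeta(\varsigma)=\frac{1}{C}+1$ we have $\zeta(\varsigma)-1 = \frac{1}{C}$, so the factor $C$ cancels exactly and we obtain $|f^{-1}(n)| \leq n^{\gamma+\varsigma}$, closing the induction.

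A mild point to verify is that the argument needs no separate base case: for $n=2$ the only admissible divisor is $e=2$, and in general the term $e=n$ corresponds to $f^{-1}(1)=1$, which already satisfies $1 \leq 1^{\gamma+\varsigma}$, so the same estimate closes the argument at the smallest value $n=2$ as well. I do not expect a genuine obstacle here — the induction parallels that of Lemma \ref{lem:upperH0}, with $\zeta(\varsigma)-1=\frac1C$ playing the role that $\zeta_\mathcal{P}(\rho(\mathcal{P}))=1$ played there. The only thing one must get right is the choice of exponent: $\varsigma$ has to be the root of $\zeta(s)=\frac1C+1$ (equivalently $C(\zeta(s)-1)=1$, the same $\varsigma$ as in \eqref{eq:f-1up}) precisely so that the constant is absorbed with no residual factor; a larger exponent would also close the induction but yield a weaker bound, while a smaller one would break the cancellation.
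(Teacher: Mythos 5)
Your proof is correct and is essentially identical to the paper's: the paper likewise rewrites the recursion \eqref{eq:DI} as $f^{-1}(n) = -\sum_{d\mid n,\ d>1} f(d)\, f^{-1}\left(\frac{n}{d}\right)$ (its \eqref{eq:DI0}) and closes the strong induction by bounding the divisor sum through $C\left(\zeta(\varsigma)-1\right)=1$. Your additional remarks on the base case and on the exact choice of $\varsigma$ match the paper's argument and require no changes.
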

\begin{proof}
	Let us prove \eqref{eq:f-upper} by induction. 
	Notice that the recurrence formula \eqref{eq:DI} can be equivalently rewritten as 
	\begin{equation}\label{eq:DI0}
	f^{-1}(1) = 1
	\quad \text{and} \quad 
	f^{-1}(n) = - \sum_{\substack{d|n \\ d>1}} f(d) f^{-1}\left(\frac{n}{d}\right), \quad n \geq 2.
	\end{equation}
	The base of induction is trivial.
	Let us fix some $n \geq 2$ and suppose that $|f^{-1}(m)| \leq m^{\gamma+\varsigma}$ for all $m < n$. 
	Then we obtain from \eqref{eq:DI0} that
	$$
	|f^{-1}(n)| 
	\leq
	\sum_{\substack{d|n \\ d>1}} C d^{\gamma} \left(\frac{n}{d}\right)^{\gamma+\varsigma} 
	=
	C \left(\sum_{d|n} \frac{1}{d^\varsigma} -1\right) n^{\gamma+\varsigma}
	\leq
	C (\zeta(\varsigma)-1) n^{\gamma+\varsigma} = n^{\gamma+\varsigma}
	$$
	since $C(\zeta(\varsigma)-1)=1$ by definition, and hence the result follows.
\end{proof}

\begin{remark}\label{rem:optimal}
The upper bounds for $|f^{-1}(n)|$ obtained in Proposition \ref{prop:f-1} and in Proposition \ref{prop:submult} above are optimal at least for $C=1$.
Let us take some $\gamma \in \mathbb{R}$ and set $f(n) = -n^\gamma$ for all $n \geq 2$. 
Then we see from \eqref{eq:Dirinv1} that $f^{-1}(n) = H(n) n^\gamma$ for all $n \geq 2$,
which is an extension of the example from \cite{hille} discussed in Section \ref{sec:intro}.
Recall that $H(n) < n^\rho$ for all $n \geq 2$, and for any $\varepsilon>0$ there exist infinitely many $n$ such that $H(n) > n^{\rho-\varepsilon}$, see Remark \ref{rem:Hn}. Thus, for any $\varepsilon>0$ there exist infinitely many $n$ such that
$$
n^{\gamma + \rho - \varepsilon} < f^{-1}(n) < n^{\gamma + \rho},
$$
which yields the optimality.
\end{remark}

\subsubsection{Exponential behaviour}\label{sec:general:exp}
Along this subsection, we will assume that $|f(n)| \leq A c^n$ for some $A, c>0$ and all $n \geq 2$. 
We easily see from \eqref{eq:Dirinv1} that, under this assumption,
\begin{equation}\label{eq:f-1c<1}
|f^{-1}(n)| \leq \sum_{k=1}^{\Omega(n)} A^k \sum_{\substack{d_1\cdots d_k=n \\ d_1,\cdots, d_k \geq 2}} |f(d_1)| \cdots |f(d_k)| 
\leq
\sum_{k=1}^{\Omega(n)} A^k c^{d^{\min}_k(n)} H_k(n)
\quad \text{for}~ c \in (0,1),
\end{equation}
and
\begin{equation}\label{eq:f-1c>1}
|f^{-1}(n)| \leq \sum_{k=1}^{\Omega(n)} A^k \sum_{\substack{d_1\cdots d_k=n \\ d_1,\cdots, d_k \geq 2}} |f(d_1)| \cdots |f(d_k)| 
\leq
\sum_{k=1}^{\Omega(n)} A^k c^{d^{\max}_k(n)} H_k(n)
\quad \text{for}~ c >1,
\end{equation} 
where
$$
d^{\min}_k(n) = \min\left\{d_1+ \cdots +d_k:~ d_1\cdots d_k = n,~ d_i \geq 2, ~d_i \in \mathbb{N}\right\}
$$
and
$$
d^{\max}_k(n) = \max\left\{d_1+ \cdots +d_k:~ d_1\cdots d_k = n,~ d_i \geq 2, ~d_i \in \mathbb{N}\right\}.
$$
Moreover, we set $d^{\min}_k(n) = \infty$ and $d^{\max}_k(n) = -\infty$ if the corresponding feasible sets are empty.

In order to estimate $|f^{-1}(n)|$ via \eqref{eq:f-1c<1} or \eqref{eq:f-1c>1}, we obtain the following lower bounds for $d^{\min}_k(n)$ and upper bound for $d^{\max}_k(n)$.
\begin{lemma}\label{lem:upperdd}
	For all $n \geq 1$ and $k \geq 1$, 
	\begin{equation}\label{eq:dminkn}
	d^{\min}_k(n) \geq k n^\frac{1}{k} \geq e \ln n
	\end{equation}
	and
	\begin{equation}\label{eq:dmaxkn}
	d^{\max}_k(n) \leq 2(k-1) + \frac{n}{2^{k-1}}.
	\end{equation}
\end{lemma}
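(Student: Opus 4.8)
The plan is to treat the two bounds separately, since they rest on opposite extremal principles. The lower bound \eqref{eq:dminkn} will follow from the arithmetic--geometric mean inequality together with a one-variable minimization, whereas the upper bound \eqref{eq:dmaxkn} will be obtained by induction on $k$, exploiting the heuristic that to maximize a sum with a prescribed product one should push all but one factor down to the smallest admissible value $2$.

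For \eqref{eq:dminkn}, I would first observe that for any admissible factorization $d_1 \cdots d_k = n$ with $d_i \geq 2$, the AM--GM inequality gives $\frac{1}{k}(d_1 + \cdots + d_k) \geq (d_1 \cdots d_k)^{1/k} = n^{1/k}$, hence $d_1 + \cdots + d_k \geq k\, n^{1/k}$. Taking the minimum over all admissible factorizations yields $d^{\min}_k(n) \geq k\, n^{1/k}$; if no admissible factorization exists the left-hand side is $\infty$ and the bound is trivial. The second inequality in \eqref{eq:dminkn} is then purely a statement about real numbers: minimizing $h(k) = k\, n^{1/k}$ over $k > 0$ by writing $\ln h(k) = \ln k + \frac{\ln n}{k}$ and differentiating, one finds the unique critical point $k = \ln n$, which is a global minimum, with value $h(\ln n) = (\ln n)\, n^{1/\ln n} = e \ln n$. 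Hence $k\, n^{1/k} \geq e \ln n$ for every $k \geq 1$, the case $n = 1$ being trivial since then $\ln n = 0$.

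For \eqref{eq:dmaxkn}, I would argue by induction on $k$. The base case $k = 1$ is an equality, as the only factorization is $d_1 = n$ and $2(1-1) + n/2^0 = n$. For the inductive step, given an admissible factorization $d_1 \cdots d_k = n$, I single out one factor $d_k$ and apply the induction hypothesis to the remaining $k-1$ factors, whose product is $m = n/d_k$, obtaining $d_1 + \cdots + d_{k-1} \leq 2(k-2) + m/2^{k-2}$. It then suffices to verify the one-variable inequality
\[
\frac{n}{d_k\, 2^{k-2}} + d_k \leq 2 + \frac{n}{2^{k-1}},
\]
which, after multiplying by $d_k$ and rearranging, is equivalent to $(d_k - 2)\bigl(d_k - \tfrac{n}{2^{k-1}}\bigr) \leq 0$. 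This holds because $d_k \geq 2$ while $d_k \leq n/2^{k-1}$, the latter following from $n = d_k \prod_{i<k} d_i \geq d_k \cdot 2^{k-1}$.

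I expect the main obstacle to be the inductive step for the upper bound: one must peel off a single factor and recognize the clean factorization of the resulting inequality. Equivalently, the same bound can be derived by a smoothing argument---repeatedly replacing any two factors $a, b > 2$ by $2$ and $ab/2$, which preserves the product and does not decrease the sum since $2 + ab/2 - a - b = \tfrac12 (a-2)(b-2) \geq 0$, until all but one factor equals $2$---and I would present this as the guiding intuition. By contrast, the lower bound is essentially routine once AM--GM is invoked.
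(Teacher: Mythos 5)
Your proof is correct, but it takes a genuinely different route from the paper. The paper proves both bounds by passing to the continuous relaxations $D^{\min}_k(n)$ and $D^{\max}_k(n)$ (allowing real $d_i \geq 2$) and analyzing the Karush--Kuhn--Tucker conditions: for the minimum it concludes that all factors must be equal, giving $k n^{1/k}$ directly, and for the maximum that the extremal configuration has $r$ equal factors $>2$ and $k-r$ factors equal to $2$, after which a monotonicity analysis of $G(x) = 2(k-x) + x n^{1/x} 2^{-(k-x)/x}$ shows $r=1$ is optimal, yielding \eqref{eq:dmaxkn}. You replace the KKT machinery entirely: for \eqref{eq:dminkn}, AM--GM gives $d_1+\cdots+d_k \geq k n^{1/k}$ in one line (and in fact needs only positivity of the $d_i$, not $d_i \geq 2$), and for \eqref{eq:dmaxkn} your induction on $k$ reduces everything to the factorization $(d_k-2)\bigl(d_k - \tfrac{n}{2^{k-1}}\bigr) \leq 0$, which holds since $2 \leq d_k \leq n/2^{k-1}$; your smoothing identity $2 + \tfrac{ab}{2} - a - b = \tfrac12(a-2)(b-2) \geq 0$ is exactly the same algebraic fact in disguise and correctly flagged as intuition rather than proof. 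The second inequality $k n^{1/k} \geq e\ln n$ you handle by the same one-variable minimization as the paper. What each approach buys: the paper's relaxation identifies the extremal configurations explicitly (useful structural information, and it motivates where the bound $2(k-1) + n/2^{k-1}$ comes from), at the cost of invoking existence of optimizers and KKT conditions; your argument is shorter, entirely elementary, and sidesteps those analytic prerequisites, while handling the degenerate cases ($n < 2^k$, $n=1$) just as cleanly.
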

\begin{proof}
First, let us obtain the lower bounds for $d^{\min}_k(n)$. 
If we assume that $n=2^k$, then $d^{\min}_k(n) = 2k$, and if $n < 2^k$, then $d^{\min}_k(n) = \infty$, i.e., the first inequality of \eqref{eq:dminkn} is satisfied for $n \leq 2^k$.
Therefore, let us assume that $n > 2^k$.
Notice that $d^{\min}_k(n) \geq D^{\min}_k(n)$, where
$$
D^{\min}_k(n) = \min\left\{d_1+ \cdots +d_k:~ d_1\cdots d_k = n,~ d_i \geq 2, ~d_i \in \mathbb{R}\right\}.
$$
That is, in the definition of $D^{\min}_k(n)$ we allow each $d_i$ to be non-natural.
It is not hard to see that $D^{\min}_k(n)$ has a solution $(d_1, \dots, d_k)$.
Therefore, $(d_1, \dots, d_k)$ satisfies the Karush--Kuhn--Tucker conditions, see, e.g., \cite{kkt}. Namely, there exist $\lambda_0 \in \mathbb{R}$ and $\lambda_i \leq 0$ for $i = 1, \dots, k$ such that 
\begin{equation}\label{eq:kkt1}
1 + \frac{\lambda_0 n}{d_i} + \lambda_i = 0, \quad i=1,\dots,k,
\end{equation}
and if some $d_i>2$, then $\lambda_i = 0$. 

Since each $d_i \geq 2$ and we assume that $n > 2^k$, there exists at least one $d_m > 2$. 
Thus,
\begin{equation}\label{eq:lm=0}
\lambda_m = 0
\quad \text{and} \quad
\lambda_0 = -\frac{d_m}{n}.
\end{equation}
If there exists another $d_l > 2$, $l \neq m$, then, as above,
$$
\lambda_l = 0
\quad \text{and} \quad
\lambda_0 = -\frac{d_l}{n},
$$
which yields $d_l = d_m$.
Suppose now that there exists some $d_\kappa = 2$. Taking into account \eqref{eq:lm=0} and recalling that $d_m>2$, we see from \eqref{eq:kkt1} that
$$
\lambda_\kappa = -1 + \frac{d_m}{2} > 0,
$$
which is impossible since $\lambda_i \leq 0$ for all $i=1,\dots,k$. Therefore, we conclude that each $d_i>2$ and 
$d_i=d_j$ for $i \neq j$. 
Consequently, $n=d^k$ and $d^{\min}_k(n) \geq k n^\frac{1}{k}$. 

Let us now estimate $k n^\frac{1}{k}$ from below by $e \ln n$.
To this end, we fix some $n \geq 2$ and consider the function
$$
G(x) = x n^\frac{1}{x}, \quad x \in \mathbb{R},~ x>0.
$$
It is clear that $G(x)$ has exactly one global minimizer $x_0 = \ln n$ with $G(x_0) = e \ln n$.
Hence, we conclude that
$$
d^{\min}_k(n) \geq k n^\frac{1}{k} \geq e \ln n, \quad n \in \mathbb{N}.
$$

Second, let us obtain the upper bound for $d^{\max}_k(n)$ by following the same strategy as above.
Evidently, for $n \leq 2^k$ the inequality \eqref{eq:dmaxkn} holds true and we can restrict ourselves to $n > 2^k$.
Denoting
$$
D^{\max}_k(n) = \max\left\{d_1+ \cdots +d_k:~ d_1\cdots d_k = n,~ d_i \geq 2, ~d_i \in \mathbb{R}\right\},
$$
we see that $d^{\max}_k(n) \leq D^{\max}_k(n)$.
Every solution $(d_1, \dots, d_k)$ to $D^{\max}_k(n)$ fulfils the Karush--Kuhn--Tucker conditions, i.e., there are $\lambda_0 \in \mathbb{R}$ and $\lambda_i \leq 0$ for $i = 1, \dots, k$ satisfying
\begin{equation*}\label{eq:kkt2}
-1 + \frac{\lambda_0 n}{d_i} + \lambda_i = 0, \quad i=1,\dots,k,
\end{equation*}
and if some $d_i>2$, then $\lambda_i = 0$. 

Recalling that $n > 2^k$, we can find $d_m>2$. Arguing as above, we deduce that if there is another $d_l>2$ with $l \neq m$, then $d_l=d_m$. 
Therefore, we can assume, without loss of generality, that $d_1=\dots=d_{r}>2$ for some $r \in \{1,\dots,k\}$, and $d_{r+1}=\dots=d_k=2$ if $r<k$. This implies that $n=d_1^{r} 2^{k-r}$, and hence $d_1=\left(\frac{n}{2^{k-r}}\right)^\frac{1}{r}$ and
\begin{equation}\label{eq:Dmax}
D^{\max}_k(n) = 2(k-r) + \frac{r n^\frac{1}{r}}{2^\frac{k-r}{r}}.
\end{equation}
To determine the actual value of $r \in \{1,\dots,k\}$, let us maximize the right-hand side of \eqref{eq:Dmax} with respect to $r$. 
To this end, consider the function
$$
G(x) = 2(k-x) + \frac{x n^\frac{1}{x}}{2^\frac{k-x}{x}}, \quad x \in \mathbb{R}, ~ x>0.
$$
We see that
$$
G'(x) = \frac{n^\frac{1}{x}}{2^\frac{k-x}{x}}
\left(
1-\left(\frac{2^k}{n}\right)^\frac{1}{x} + \ln\left(\left(\frac{2^k}{n}\right)^\frac{1}{x}\right)
\right)\leq 0, \quad x>0,
$$
and $G'(x)=0$ if and only if $n = 2^k$. 
Since $n > 2^k$, $G(x)$ decreases with respect to $x>0$, and we conclude that $r=1$ is the unique maximizer for the right-hand side of \eqref{eq:Dmax}, which yields the claimed upper bound \eqref{eq:dmaxkn}.
\end{proof}

Combining now \eqref{eq:f-1c<1} or \eqref{eq:f-1c>1} with Lemma \ref{lem:upperdd}, we obtain the following result.
\begin{proposition}
	Assume that there exist $A, c>0$ such that $|f(n)| \leq Ac^n$ for all $n \geq 2$.
	If $c \in (0,1)$, then
	\begin{equation}\label{eq:f-1:pol3}
	|f^{-1}(n)|
	\leq
	\frac{\Omega(n)\cdot An^{\varsigma + e \ln c}}{2^{\varsigma}}
	\leq
	\frac{An^{\varsigma + e \ln c}\ln n}{2^{\varsigma}\ln 2},
	\quad n \geq 2,
	\end{equation}
	where $\varsigma>1$ is the unique root of $\zeta(s)=\frac{1}{A}+1$.
	If $c \in (0,1)$ but $A \leq 1$, then  
	\begin{equation}\label{eq:f-1:pol}
	|f^{-1}(n)| \leq n^{\rho + e \ln c}, \quad n \geq 2,
	\end{equation}
	where $\rho = 1.72865\dots$ is the unique root of $\zeta(\rho) = 2$.
	
	If $c>1$, then there exists $\widetilde{A}>0$ such that
	\begin{equation}\label{eq:f-1:pol2}
	|f^{-1}(n)|
	\leq
	Ac^n + \frac{(\Omega(n)-1)An^{\upsilon}}{2^{\upsilon}}c^{\frac{n}{2}}
	\leq
	\widetilde{A} c^n, \quad n \geq 2,
	\end{equation}
	where $\upsilon>1$ is the unique root of $\zeta(s)=\frac{1}{Ac^2}+1$.
\end{proposition}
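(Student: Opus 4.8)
The plan is to feed the estimates of Lemma~\ref{lem:upperdd} into the bounds \eqref{eq:f-1c<1} and \eqref{eq:f-1c>1}, then to control the ordered-factorization counts $H_k(n)$ by Corollary~\ref{cor:lem:H}, calibrating the free exponent ($\varsigma$ or $\upsilon$) so that the resulting sum over $k$ collapses. In each regime the role of $d^{\min}_k(n)$ or $d^{\max}_k(n)$ is to convert the exponential factor $c^{d_k}$ into something that either pulls out of the sum (case $c<1$) or factors cleanly through a geometric progression (case $c>1$). Throughout, the well-definedness of $\varsigma$ and $\upsilon$ is immediate, since $\zeta$ maps $(1,\infty)$ monotonically onto $(1,\infty)$ and both $\frac{1}{A}+1$ and $\frac{1}{Ac^2}+1$ exceed $1$.

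For $c \in (0,1)$ I would start from \eqref{eq:f-1c<1}. Since $\ln c < 0$ and $d^{\min}_k(n) \geq e \ln n$ by \eqref{eq:dminkn}, monotonicity of $x \mapsto c^x$ gives $c^{d^{\min}_k(n)} \leq c^{e \ln n} = n^{e \ln c}$, a factor independent of $k$ that I pull outside the summation. It then remains to bound $\sum_{k=1}^{\Omega(n)} A^k H_k(n)$: inserting $H_k(n) \leq (\zeta(s)-1)^{k-1} n^s / 2^s$ from \eqref{eq:upperH} and choosing $s = \varsigma$ so that $A(\zeta(\varsigma)-1) = 1$ turns the sum into $\frac{A n^\varsigma}{2^\varsigma}$ times $\Omega(n)$ equal summands, which yields \eqref{eq:f-1:pol3}; the second inequality there follows from $\Omega(n) \leq \frac{\ln n}{\ln 2}$. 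In the subcase $A \leq 1$ I would instead use $A^k \leq 1$ together with $\sum_{k=1}^{\Omega(n)} H_k(n) = H(n) \leq n^\rho$ (see \eqref{eq:HnHk} and Corollary~\ref{cor:lem:Hodd}), giving directly $|f^{-1}(n)| \leq n^{e \ln c}\, n^\rho$, i.e.\ \eqref{eq:f-1:pol}.

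The case $c>1$ is the one that needs care, and I would treat the term $k=1$ separately: the single factorization counted by $H_1(n)=1$ has $d^{\max}_1(n)=n$ and contributes exactly $A c^n$. For $k \geq 2$ I would use \eqref{eq:dmaxkn} in the form $d^{\max}_k(n) \leq 2(k-1) + n/2^{k-1} \leq 2(k-1) + n/2$, so that (since $c>1$) $c^{d^{\max}_k(n)} \leq (c^2)^{k-1} c^{n/2}$. Combining this with \eqref{eq:upperH} gives, for each such $k$, a bound $A\,(A c^2 (\zeta(s)-1))^{k-1}\, c^{n/2}\, n^s/2^s$; choosing $s = \upsilon$ so that $A c^2 (\zeta(\upsilon)-1) = 1$ makes every summand equal to $A n^\upsilon c^{n/2}/2^\upsilon$. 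Summing the $\Omega(n)-1$ terms with $k \geq 2$ and restoring the $k=1$ contribution produces the first inequality in \eqref{eq:f-1:pol2}.

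The final step, and the main obstacle, is to absorb everything into $\widetilde{A} c^n$. After dividing by $c^n$ this amounts to bounding $\frac{(\Omega(n)-1) A n^\upsilon}{2^\upsilon}\, c^{-n/2}$ uniformly in $n$. The point is that $c>1$ makes $c^{-n/2}$ decay exponentially while $\Omega(n) \leq \frac{\ln n}{\ln 2}$ and $n^\upsilon$ grow only polynomially, so this quantity tends to $0$ as $n \to \infty$ and is therefore bounded over all $n \geq 2$. Taking $\widetilde{A} = A + \sup_{n \geq 2} \frac{(\Omega(n)-1) A n^\upsilon}{2^\upsilon}\, c^{-n/2}$ then gives the second inequality in \eqref{eq:f-1:pol2} and completes the proof.
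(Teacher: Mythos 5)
Your proposal is correct and follows essentially the same route as the paper: feed \eqref{eq:dminkn} or \eqref{eq:dmaxkn} and \eqref{eq:upperH} into \eqref{eq:f-1c<1} or \eqref{eq:f-1c>1}, calibrate $s=\varsigma$ (resp.\ $s=\upsilon$) so the geometric factor collapses to $1$, use $H(n)\leq n^\rho$ when $A\leq 1$, and split off the $k=1$ term when $c>1$. Your only deviation is cosmetic: where the paper dismisses the final step with ``the first term is the leading term,'' you justify it explicitly via the supremum defining $\widetilde{A}$, which is a slightly more careful rendering of the same argument.
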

\begin{proof}
	First, assume that $c \in (0,1)$. We estimate \eqref{eq:f-1c<1} by \eqref{eq:upperH} and \eqref{eq:dminkn} as
	$$
	|f^{-1}(n)|
	\leq 
	\sum_{k=1}^{\Omega(n)} A^k c^{d^{\min}_k(n)} H_k(n)
	\leq
	\frac{A n^s c^{e \ln n}}{2^{s}} \sum_{k=1}^{\Omega(n)} (A(\zeta(s)-1))^{k-1}, \quad n \geq 2,
	$$
	for every $s>1$.
	Taking $s=\varsigma$, we have $A(\zeta(\varsigma)-1)=1$ and \eqref{eq:f-1:pol3} follows directly.
	Under the additional assumption $A \leq 1$, we easily get \eqref{eq:f-1:pol} from \eqref{eq:f-1c<1} and Corollary \ref{cor:lem:Hodd}:
	$$
	|f^{-1}(n)| \leq 
	\sum_{k=1}^{\Omega(n)} A^kc^{d^{\min}_k(n)} H_k(n) \leq c^{e \ln n} \sum_{k=1}^{\Omega(n)} H_k(n) = n^{e \ln c} H(n) \leq n^{e \ln c + \rho}, \quad n \geq 2.
	$$
	
	Second, assume that $c>1$. With $d_1^{\max}(n)=n$, $H_1(n)=1$, \eqref{eq:upperH}, and \eqref{eq:dmaxkn}, we deduce
	\begin{align}
	\notag
	|f^{-1}(n)|
	&\leq
	Ac^n + \sum_{k=2}^{\Omega(n)}A^kc^{2(k-1)+\frac{n}{2^{k-1}}}\frac{(\zeta(s)-1)^{k-1}n^s}{2^s}\\
	\label{eq:c_asymp}
	&\leq
	Ac^n + \frac{An^sc^\frac{n}{2}}{2^s} \sum_{k=2}^{\Omega(n)}(Ac^2(\zeta(s)-1))^{k-1},
	\quad n \geq 2,
	\end{align}
	for every $s>1$. For $s=\upsilon$ with $Ac^2(\zeta(\upsilon)-1)=1$ we obtain the first inequality in \eqref{eq:f-1:pol2}. 
	Obviously, the first term in \eqref{eq:c_asymp} is the leading term as $n \to \infty$, and hence the second inequality in \eqref{eq:f-1:pol2} is valid, as well.  
\end{proof}

\begin{remark}
	Comparing the upper bound \eqref{eq:f-1:pol} for a general $f(n)$ with the upper bound \eqref{eq:expf1} for a multiplicative $f(n)$, we see that \eqref{eq:expf1} provides the better asymptotic. 
	On the other hand, \eqref{eq:f-1:pol} provides an improvement of the following upper bound obtained in the proof of \cite[Theorem 3]{sowa}: if $|f(1)| \geq \frac{c}{2}$ and $|f(n)| \leq c^n$ for $n \geq 2$ where $c \leq \left(2 \cdot 3^\frac{4}{3} + 3^\frac{2}{3}\right)^{-1}$, then
	$$
	|f^{-1}(n)| \leq \frac{2}{c\, n^2}, 
	\quad n \geq 2.
	$$
\end{remark}

\subsection{Miscellaneous cases}\label{sec:misc}
In this section, we consider the asymptotic	behaviour of $f^{-1}(n)$ for several special classes of $f(n)$. We limit ourselves to the consideration of a polynomial bound for $|f(n)|$.

\subsubsection{Truncated $f(n)$}
First, let us assume that there exists $N \geq 2$ such that $f(n) = 0$ for all $2 \leq n \leq N$. 
We see from \eqref{eq:Dirinv1} that $f^{-1}(n) = 0$ for all $2 \leq n \leq N$, and $f^{-1}(n)$ depends only on the values of $f(d)$ with $d > N$. 
Therefore, we can argue in much the same way as in Proposition \ref{prop:f-1} to obtain the following result.
\begin{proposition}\label{prop:f-5}
	Assume that there exist $N \geq 2$, $C>0$, and $\gamma \in \mathbb{R}$ such that $f(n)=0$ for all $2 \leq n \leq N$ and $|f(n)| \leq Cn^\gamma$ for all $n > N$. 
	Then 
	$$
	|f^{-1}(n)| \leq n^{\gamma+\varsigma},
	\quad n > N,
	$$
	where $\varsigma>1$ is the unique root of $\zeta(s) = \frac{1}{C}+ \sum_{m=1}^{N}\frac{1}{m^s}$.
\end{proposition}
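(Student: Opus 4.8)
The plan is to mimic the inductive argument of Proposition \ref{prop:f-1}, exploiting the vanishing of $f$ on $[2,N]$ to restrict the range of summation in the recurrence and, consequently, to obtain a smaller exponent. Before the induction I would first record two preliminary facts. Firstly, $\varsigma$ is well defined: writing the defining equation as $\sum_{m=N+1}^{\infty} m^{-s} = \zeta(s) - \sum_{m=1}^{N} m^{-s} = \tfrac{1}{C}$, the left-hand side is continuous and strictly decreasing on $(1,\infty)$, tends to $+\infty$ as $s \to 1^{+}$ (the tail of the harmonic series diverges) and to $0$ as $s \to \infty$, so there is a unique root $\varsigma > 1$. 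Secondly, as already observed in the paragraph preceding the statement, formula \eqref{eq:Dirinv1} forces $f^{-1}(n) = 0$ for all $2 \le n \le N$, since every factorization of such an $n$ into factors $\ge 2$ must contain a factor lying in $[2,N]$, on which $f$ vanishes.

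Next I would prove the slightly stronger claim that $|f^{-1}(n)| \le n^{\gamma+\varsigma}$ holds for every $n \ge 1$ (not only for $n > N$), which is what the induction actually needs. The cases $n = 1$ and $2 \le n \le N$ are immediate from the two facts above, since $|f^{-1}(1)| = 1 = 1^{\gamma+\varsigma}$ and $|f^{-1}(n)| = 0$ on $[2,N]$; these serve as the base of the induction. For the inductive step I would fix $n > N$, assume the bound for all $m < n$, and start from the rewritten recurrence \eqref{eq:DI0}. Because $f(d) = 0$ for $2 \le d \le N$, only divisors $d > N$ survive, so
\begin{align*}
|f^{-1}(n)|
\le
\sum_{\substack{d|n \\ d > N}} |f(d)|\, |f^{-1}(n/d)|
\le
\sum_{\substack{d|n \\ d > N}} C d^{\gamma} \left(\frac{n}{d}\right)^{\gamma+\varsigma}
=
C n^{\gamma+\varsigma} \sum_{\substack{d|n \\ d > N}} \frac{1}{d^{\varsigma}},
\end{align*}
where I used $|f(d)| \le C d^{\gamma}$ (valid since $d > N$) together with the induction hypothesis applied to each cofactor $n/d < n$ (noting $n/d$ may equal $1$, lie in $[2,N]$, or exceed $N$, all three being covered by the strengthened claim).

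The key step is then to bound the divisor sum by the corresponding full tail: since the divisors of $n$ exceeding $N$ form a subset of all integers exceeding $N$,
\begin{align*}
\sum_{\substack{d|n \\ d > N}} \frac{1}{d^{\varsigma}}
\le
\sum_{m = N+1}^{\infty} \frac{1}{m^{\varsigma}}
=
\zeta(\varsigma) - \sum_{m=1}^{N} \frac{1}{m^{\varsigma}}
=
\frac{1}{C},
\end{align*}
by the definition of $\varsigma$, which yields $|f^{-1}(n)| \le n^{\gamma+\varsigma}$ and closes the induction. There is no serious obstacle here; the only point requiring care is to use the full tail $\sum_{m>N} m^{-\varsigma}$ rather than the cruder $\zeta(\varsigma) - 1$, as it is precisely this sharper bound that produces the improved root of $\zeta(s) = \tfrac{1}{C} + \sum_{m=1}^{N} m^{-s}$ instead of the root of $\zeta(s) = \tfrac{1}{C}+1$ appearing in Proposition \ref{prop:f-1}.
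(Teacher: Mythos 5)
Your proof is correct and follows essentially the same route as the paper, which simply notes that $f^{-1}(n)=0$ for $2\le n\le N$ and then runs the induction of Proposition \ref{prop:f-1} with the divisor sum bounded by the tail $\sum_{m>N} m^{-\varsigma} = \zeta(\varsigma)-\sum_{m=1}^{N} m^{-\varsigma} = \tfrac{1}{C}$. Your explicit treatment of the base cases and of the well-definedness of $\varsigma$ fills in details the paper leaves implicit, but the argument is the same.
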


Second, let us assume that there exists $N \geq 2$ such that $f(n) = 0$ for all $n \geq N+1$. 
Clearly, in this case $f^{-1}(n)$ depends only on the values of $f(d)$ with $d < N$. 
Arguing along the same lines as in Proposition \ref{prop:f-1}, we derive the following result.
\begin{proposition}\label{prop:f-7}
	Assume that there exist $N \geq 2$, $C>0$, and $\gamma \in \mathbb{R}$ such that $f(n)=0$ for all $n \geq N+1$ and $|f(n)| \leq Cn^\gamma$ for all $n \leq N$. 
	Then 
	$$
	|f^{-1}(n)| \leq n^{\gamma+\varsigma},
	\quad n \geq 2,
	$$
	where $\varsigma > 0$ is the unique root of $\sum_{m=2}^{N}\frac{1}{m^s} = \frac{1}{C}$. 
\end{proposition}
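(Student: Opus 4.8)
The plan is to mirror the inductive argument of Proposition \ref{prop:f-1}, exploiting the fact that $f$ is now supported on $\{1,2,\dots,N\}$. Before the induction I would first record that $\varsigma$ is well defined: the map $s \mapsto \sum_{m=2}^{N} m^{-s}$ is continuous and strictly decreasing on $(0,\infty)$, tending to $N-1$ as $s \to 0^+$ and to $0$ as $s \to \infty$. Hence the equation $\sum_{m=2}^{N} m^{-s} = \frac{1}{C}$ possesses a unique positive solution $\varsigma$ exactly when $\frac{1}{C} < N-1$, i.e. $C(N-1)>1$, a condition I would note is implicit in the hypotheses.

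The heart of the argument is the recurrence \eqref{eq:DI0}. Since $f(d)=0$ whenever $d \geq N+1$, the sum defining $f^{-1}(n)$ collapses to those divisors $d$ of $n$ with $2 \leq d \leq N$:
$$
f^{-1}(n) = - \sum_{\substack{d \mid n \\ 2 \leq d \leq N}} f(d)\, f^{-1}\!\left(\frac{n}{d}\right), \quad n \geq 2.
$$
I would then argue by induction on $n$. The base case $f^{-1}(1)=1=1^{\gamma+\varsigma}$ is immediate. Fixing $n \geq 2$ and assuming $|f^{-1}(m)| \leq m^{\gamma+\varsigma}$ for all $m<n$, the inductive hypothesis applies to every factor $f^{-1}(n/d)$, because $d \geq 2$ forces $n/d < n$ (and $d=n\leq N$ reduces to $f^{-1}(1)$).

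The decisive computation then combines $|f(d)| \leq C d^{\gamma}$ with the inductive bound:
$$
|f^{-1}(n)| \leq \sum_{\substack{d \mid n \\ 2 \leq d \leq N}} C d^{\gamma} \left(\frac{n}{d}\right)^{\gamma+\varsigma} = C n^{\gamma+\varsigma} \sum_{\substack{d \mid n \\ 2 \leq d \leq N}} \frac{1}{d^{\varsigma}} \leq C n^{\gamma+\varsigma} \sum_{m=2}^{N} \frac{1}{m^{\varsigma}} = n^{\gamma+\varsigma},
$$
where the penultimate inequality enlarges the sum over the divisors of $n$ lying in $\{2,\dots,N\}$ to the full sum over all integers in that range, and the final equality invokes the defining relation $\sum_{m=2}^{N} m^{-\varsigma} = \frac{1}{C}$.

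I do not anticipate a genuine obstacle: the structure is identical to that of Proposition \ref{prop:f-1}, with the infinite divisor bound $\sum_{d\mid n} d^{-\varsigma} \leq \zeta(\varsigma)$ replaced by the finite truncated bound $\sum_{d\mid n,\, 2\leq d\leq N} d^{-\varsigma} \leq \sum_{m=2}^{N} m^{-\varsigma}$. The only point deserving care is the preliminary existence claim for the positive root $\varsigma$, which I would make explicit together with the constraint $C(N-1)>1$.
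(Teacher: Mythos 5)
Your proof is correct and is precisely the argument the paper intends: the paper derives this proposition by the same induction as Proposition \ref{prop:f-1}, with the recurrence \eqref{eq:DI0} truncated to divisors $2 \leq d \leq N$ so that the defining relation $\sum_{m=2}^{N} m^{-\varsigma} = \frac{1}{C}$ closes the inductive step. Your preliminary remark on the existence of the positive root $\varsigma$ (equivalently, $C(N-1)>1$) is a sensible point of care that the paper leaves implicit.
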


\subsubsection{$f(n)$ supported on odd $n$}\label{sec:fodd}
Assume that $f(n)$ represents the Fourier coefficients of a function $F \in L^2(0,1)$, that is, $F(x) = \sum_{n=1}^{\infty} f(n) \sin(n \pi x)$. 
If one is interested in the basisness or completeness of the system $\{F(nx)\}$ (see Section \ref{sec:intro}), then it seems natural to choose $F$ such that it is symmetric with respect to the point $x=1/2$, see, e.g., generalized trigonometric functions \cite{bind}.
Under this symmetry assumption, we have $f(n)=0$ for all even $n$ and it is clear from \eqref{eq:Dirinv1} that $f^{-1}(n)=0$ for all even $n$, as well.
Arguing as in the proof of Proposition \ref{prop:f-1} and using \eqref{eq:zetaodd}, we get the following result.
\begin{proposition}\label{prop:f-2}
	Assume that $f(n)=0$ for all even $n$ and there exist $C>0$ and $\gamma \in \mathbb{R}$ such that $|f(n)| \leq Cn^\gamma$ for all $n \geq 2$. Then 
	$$
	|f^{-1}(n)| \leq n^{\gamma+\varsigma}, \quad n \geq 2,
	$$
	where $\varsigma>1$ is the unique root of $\left(1-\frac{1}{2^s}\right)\zeta(s) = \frac{1}{C}+1$. 
	In particular, if $C=1$, then $\varsigma = \eta  = 1.37779\dots$ 
\end{proposition}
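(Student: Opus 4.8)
The plan is to follow the inductive argument in the proof of Proposition~\ref{prop:f-1} almost verbatim, replacing the full zeta function $\zeta(s)$ by its odd analogue $\zeta_{\mathbb{N}_3^{\text{odd}}}(s)$ from \eqref{eq:zetaodd}. I would begin with two reductions already noted in the surrounding text. Since $f(n)=0$ for every even $n$, formula \eqref{eq:Dirinv1} forces $f^{-1}(n)=0$ for every even $n$, so the desired bound is trivially satisfied there and it suffices to treat odd $n$. I would also confirm that $\varsigma$ is well defined: by \eqref{eq:zetaodd} the defining equation is equivalent to $\zeta_{\mathbb{N}_3^{\text{odd}}}(s)=1/C$, and since $\zeta_{\mathbb{N}_3^{\text{odd}}}(s)$ decreases strictly on $(1,\infty)$ from $+\infty$ as $s\to1^{+}$ to $0$ as $s\to\infty$, for each $C>0$ there is a unique root $\varsigma>1$.

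For the estimate itself I would induct on $n$ over the odd integers, using the recurrence \eqref{eq:DI0}; the base case $n=1$ holds since $f^{-1}(1)=1$. Fix odd $n\ge3$ and assume $|f^{-1}(m)|\le m^{\gamma+\varsigma}$ for all $m<n$. The decisive structural point is that every divisor of an odd number is odd: thus in $f^{-1}(n)=-\sum_{d|n,\,d>1}f(d)f^{-1}(n/d)$ each $d$ ranges over odd integers $\ge3$ and each quotient $n/d$ is odd with $n/d<n$. Inserting $|f(d)|\le Cd^{\gamma}$ together with the induction hypothesis gives
\begin{equation*}
|f^{-1}(n)|\le C\,n^{\gamma+\varsigma}\sum_{\substack{d|n\\ d>1}}\frac{1}{d^{\varsigma}}.
\end{equation*}

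The key step is to bound the divisor sum by the odd zeta series. Because the divisors $d>1$ of the odd number $n$ form a subset of $\{3,5,7,\dots\}$, I have $\sum_{d|n,\,d>1}d^{-\varsigma}\le\sum_{m\ge3,\,m\ \mathrm{odd}}m^{-\varsigma}=\zeta_{\mathbb{N}_3^{\text{odd}}}(\varsigma)$ by \eqref{eq:zetaodd}; combined with the defining relation $C\,\zeta_{\mathbb{N}_3^{\text{odd}}}(\varsigma)=1$ this closes the induction and yields $|f^{-1}(n)|\le n^{\gamma+\varsigma}$. For the final assertion, setting $C=1$ turns the defining equation into $\zeta_{\mathbb{N}_3^{\text{odd}}}(s)=1$, whose root is precisely $\eta=\rho(\mathbb{N}_3^{\text{odd}})=1.37779\dots$ by the definition preceding Corollary~\ref{cor:lem:Hodd}. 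I do not expect a genuine obstacle: the argument is a transcription of Proposition~\ref{prop:f-1}, and the only point requiring care is verifying that restricting to odd $n$ legitimately sharpens the divisor bound from $\zeta(\varsigma)$ to $\zeta_{\mathbb{N}_3^{\text{odd}}}(\varsigma)$, which is exactly what replaces the exponent $\rho$ by the smaller $\eta$.
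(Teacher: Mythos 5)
Your proof is correct and follows exactly the route the paper intends: it states Proposition~\ref{prop:f-2} with only the remark ``Arguing as in the proof of Proposition~\ref{prop:f-1} and using \eqref{eq:zetaodd}'', and your transcription of that induction---replacing the divisor-sum bound $\zeta(\varsigma)-1$ by $\zeta_{\mathbb{N}_3^{\text{odd}}}(\varsigma)=\left(1-\frac{1}{2^{\varsigma}}\right)\zeta(\varsigma)-1$, which is legitimate because all divisors $d>1$ of an odd $n$ are odd and at least $3$---is precisely what that remark entails. Your added verifications (vanishing of $f^{-1}$ on even integers via \eqref{eq:Dirinv1}, well-definedness of $\varsigma$ by strict monotonicity of $\zeta_{\mathbb{N}_3^{\text{odd}}}$, and the identification $\varsigma=\eta$ for $C=1$) are all accurate.
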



\begin{thebibliography}{99}
	
\bibitem{apostol}
Apostol, T. M. (1976). Introduction to analytic number theory. Springer-Verlag, New York. 
\href{https://doi.org/10.1007/978-1-4757-5579-4}{\nolinkurl{DOI:10.1007/978-1-4757-5579-4}}

\bibitem{bind}
Binding, P., Boulton, L., \v{C}epi\v{c}ka, J., Dr\'{a}bek, P., \& Girg, P. (2006). Basis properties of eigenfunctions of the $p$-Laplacian. 
Proceedings of the American Mathematical Society, 134(12), 3487-3494.
\href{https://doi.org/10.1090/S0002-9939-06-08001-4}{\nolinkurl{DOI:10.1090/S0002-9939-06-08001-4}}

\bibitem{chor}
Chor, B., Lemke, P., \& Mador, Z. (2000). On the number of ordered factorizations of natural numbers. Discrete Mathematics, 214(1-3), 123-133.
\href{https://doi.org/10.1016/S0012-365X(99)00223-X}{\nolinkurl{DOI:10.1016/S0012-365X(99)00223-X}}

\bibitem{cop}
Coppersmith, D., \& Lewenstein, M. (2005). Constructive bounds on ordered factorizations. SIAM Journal on Discrete Mathematics, 19(2), 301-303.
\href{https://doi.org/10.1137/S0895480104445861}{\nolinkurl{DOI:10.1137/S0895480104445861}}

\bibitem{hauk2}
Haukkanen, P. (1997). On the real powers of completely multiplicative arithmetical functions. Nieuw Archief voor Wiskunde, 15(1-2), 73-78.

\bibitem{hauk}
Haukkanen, P. (2000). Expressions for the Dirichlet inverse of arithmetical functions. Notes on Number Theory and Discrete Mathematics, 6(4), 118-124.
\url{http://nntdm.net/volume-06-2000/number-4/}

\bibitem{lind}
Hedenmalm, H., Lindqvist, P., \& Seip, K. (1997). A Hilbert space of Dirichlet series and systems of dilated functions in $L^2(0,1)$. Duke Mathematical Journal, 86(1), 1-37.
\href{https://doi.org/10.1215/S0012-7094-97-08601-4}{\nolinkurl{DOI:10.1215/S0012-7094-97-08601-4}}

\bibitem{lind2}
Hedenmalm, H., Lindqvist, P., \& Seip, K. (1999). Addendum to ``A Hilbert space of Dirichlet series and systems of dilated functions in $L^2(0,1)$''. Duke Mathematical Journal, 99(1), 175-178.
\href{https://doi.org/10.1215/S0012-7094-99-09907-6}{\nolinkurl{DOI:10.1215/S0012-7094-99-09907-6}}

\bibitem{hille}
Hille, E. (1936). A problem in ``Factorisatio Numerorum''. Acta Arithmetica, 2(1), 134-144.
\url{http://matwbn.icm.edu.pl/ksiazki/aa/aa2/aa215.pdf}

\bibitem{hwangjan}
Hwang, H. K., \& Janson, S. (2011). A central limit theorem for random ordered factorizations of integers. Electronic Journal of Probability, 16, 347-361.
\href{https://doi.org/10.1214/EJP.v16-858}{\nolinkurl{DOI:10.1214/EJP.v16-858}}

\bibitem{ingham}
Ingham, A. E. (1945). Some Tauberian theorems connected with the prime number theorem. Journal of the London Mathematical Society, 1(3), 171-180.
\href{https://doi.org/10.1112/jlms/s1-20.3.171}{\nolinkurl{DOI:10.1112/jlms/s1-20.3.171}}

\bibitem{jakim}
Jakimczuk, R. (2012). Sum of prime factors in the prime factorization of an integer. International Mathematical Forum, 7(53-56), 2617-2621.
\url{http://www.m-hikari.com/imf/imf-2012/53-56-2012/jakimczukIMF53-56-2012-2.pdf}

\bibitem{jukes}
Jukes, K. A. (1971). On the Ingham and $(D, h(n))$ summation methods. Journal of the London Mathematical Society, 2(4), 699-710.
\href{https://doi.org/10.1112/jlms/s2-3.4.699}{\nolinkurl{DOI:10.1112/jlms/s2-3.4.699}}

\bibitem{kacper}
Kaczorowski, J., \& Perelli, A. (2003). On the prime number theorem for the Selberg class. Archiv der Mathematik, 80(3), 255-263.
\href{https://doi.org/10.1007/s00013-003-0032-9}{\nolinkurl{DOI:10.1007/s00013-003-0032-9}}

\bibitem{kalmar}
Kalm\'ar, L. (1931). A ``factorisatio numerorum'' probl\'em\'aj\'ar\'ol, Matematikai \'es Fizikai Lapok, 38, 1-15. 
\url{http://real-j.mtak.hu/7307/}

\bibitem{knopf}
Knopfmacher, A., \& Mays, M. E. (2005). A survey of factorization counting functions. International Journal of Number Theory, 1(04), 563-581.
\href{https://doi.org/10.1142/S1793042105000315}{\nolinkurl{DOI:10.1142/S1793042105000315}}

\bibitem{kkt}
Kuhn, H. W., \& Tucker, A. W. (1951). Nonlinear Programming. Proceedings of the Second Berkeley Symposium on Mathematical Statistics and Probability, 481--492, University of California Press, Berkeley, California. 
\url{https://projecteuclid.org/euclid.bsmsp/1200500249}

\bibitem{segal2}
Segal, S. L. (1967). Summability by Dirichlet convolutions. Mathematical Proceedings of the Cambridge Philosophical Society, 63(2), 393-400. 
\href{https://doi.org/10.1017/S0305004100041311}{\nolinkurl{DOI:10.1017/S0305004100041311}}
Erratum 65(1), p. 369.
\href{https://doi.org/10.1017/S0305004100044315}{\nolinkurl{DOI:10.1017/S0305004100044315}}

\bibitem{segal1}
Segal, S. L. (1974). Prime number theorem analogues without primes. Journal f\"ur die reine und angewandte Mathematik, 265, 1-22.
\href{https://doi.org/10.1515/crll.1974.265.1}{\nolinkurl{DOI:10.1515/crll.1974.265.1}}

\bibitem{sowa}
Sowa, A. (2013). The Dirichlet ring and unconditional bases in $L_2[0, 2\pi]$. Functional Analysis and Its Applications, 47(3), 227-232.
\href{https://doi.org/10.1007/s10688-013-0028-6}{\nolinkurl{DOI:10.1007/s10688-013-0028-6}}

\bibitem{spri}
Sprittulla, J. (2016). Ordered factorizations with $k$ factors. \href{https://arxiv.org/abs/1610.04826v1}{arXiv:1610.04826}.

\bibitem{wintner}
Wintner, A. (1943). Eratosthenian averages. Waverly Press, Baltimore.

\bibitem{wintner2}
Wintner, A. (1947). On T\"opler's Wave Analysis. American Journal of Mathematics, 69(4), 758-768.
\href{https://doi.org/10.2307/2371797}{\nolinkurl{DOI:10.2307/2371797}}

\end{thebibliography}
\end{document}